\definecolor{cornellred}{rgb}{0.7, 0.11, 0.11}
\def \N {\mathbb{N}}
\def \R {\mathbb{R}}
\def \C {\mathbb{C}}
\def\ee {\rm e}
\def \e {\varepsilon}
\def \LL {\mathcal{L}_{\alpha}}
\theoremstyle{definition}
\newtheorem{definition}{Definition}[section]
\newtheorem{remark}[definition]{Remark}
\theoremstyle{plain}
\newtheorem{theorem}[definition]{Theorem}
\newtheorem{proposition}[definition]{Proposition}
\newtheorem{lemma}[definition]{Lemma}
\numberwithin{equation}{section}
\begin{document}
\title[Klein-Gordon-Maxwell equations]{Klein-Gordon-Maxwell equations driven \\
by mixed local-nonlocal operators}

\author[N.\,Cangiotti]{Nicol\`o Cangiotti}
\author[M.\,Caponi]{Maicol Caponi}
\author[A.\,Maione]{Alberto Maione}
\author[E.\,Vitillaro]{Enzo Vitillaro}

\address[N.\,Cangiotti]{Department of Mathematics\newline\indent Politecnico di Milano \newline\indent
via Bonardi 9, Campus Leonardo, 20133 Milan, Italy}
\email{nicolo.cangiotti@polimi.it}

\address[M.\,Caponi]{Dipartimento di Matematica e Applicazioni ``R. Caccioppoli''\newline\indent Università degli Studi di Napoli ``Federico II'' \newline\indent
Via Cintia, Monte S. Angelo, 80126 Naples, Italy}
\email{maicol.caponi@unina.it}

\address[A.\,Maione]{Abteilung f\"{u}r Angewandte Mathematik\newline\indent Albert-Ludwigs-
Universit\"{a}t Freiburg \newline\indent Hermann-Herder-Strasse 10, 79104 Freiburg i. Br., Germany}
\email{alberto.maione@mathematik.uni-freiburg.de}

\address[E.\,Vitillaro]{Dipartimento di Matematica e Informatica DMI\newline\indent Università degli Studi di Perugia\newline\indent
Via Luigi Vanvitelli 1, 06123 Perugia, Italy}
\email{enzo.vitillaro@unipg.it}

\subjclass[2020]{35A01, 35A15, 35J50, 35J60, 35Q60, 35R11, 58E40}

\keywords{Nonlocal operators, fractional operators, variational methods, critical points theory, Klein-Gordon-Maxwell system}

%\thanks{
% A. Maione is supported by the DFG SPP 2256 project ``Variational Methods for Predicting Complex Phenomena in Engineering Structures and Materials'', by the University of Freiburg and by INdAM-GNAMPA Project ``Equazioni differenziali alle derivate parziali in fenomeni non lineari''.}

\begin{abstract}
Classical results concerning Klein-Gordon-Maxwell type systems are shortly reviewed and generalized to the setting of mixed local-nonlocal operators, where the nonlocal one is allowed to be nonpositive definite according to a real parameter.
In this paper, we provide a range of parameter values to ensure the existence of solitary (standing) waves, obtained as Mountain Pass critical points for the associated energy functionals in two different settings, by considering two different classes of potentials: constant potentials and continuous, bounded from below, and coercive potentials.
\end{abstract}

\maketitle

% \tableofcontents

%---------------------------------
% Introduction
%---------------------------------

\section{Introduction and main results}

%---------------------------------
% Presentation of the problem
%---------------------------------

%\subsection{Presentation of the problem}

\medskip

In this paper we shall deal with generalized Klein-Gordon-Maxwell (KGM) type systems of the form
\begin{equation}\label{eq:problem0}
\begin{cases}
\LL u + \left[V-(\omega+e\varphi)^2\right]u = |u|^{p-2}u &\textrm{in } \R^3,\\
\Delta \varphi= e(\omega+e\varphi) u^2 &\textrm{in } \R^{3},
\end{cases}
\end{equation}
where $\omega\in\mathbb{R}\setminus\{0\}$, $e\in\{\pm 1\}$, $V\in C(\R^3)$ is bounded from below, and $p\in(2,2^*)$. Here $2^*$ denotes the classical Sobolev critical exponent $2^*=\dfrac{2n}{n-2}$ in dimension $n=3$, that is $2^*=6$.
The operator $\LL$ is a mixed local-nonlocal one of the following form
\begin{equation}\label{Lalfa}
\LL = \LL^s := -\Delta +\alpha (-\Delta)^s,
\end{equation}
where $\alpha \in \R$, $\Delta$ denotes the classical Laplacian, and $(-\Delta)^s$, $s\in (0,1)$, denotes the fractional Laplacian, which we shall introduce in the sequel.

%--------------------------
% Literature overview
%--------------------------

\medskip

In the last two decades there was a growing interest around the KGM systems.
In~\cite{BF2,BF}, Benci and Fortunato introduced a mathematical model describing nonlinear Klein-Gordon fields interacting with the electromagnetic field and proved the existence of infinitely many radially symmetric solutions of~\eqref{eq:problem0} (when $\alpha=0$, that is $\LL=-\Delta$) only for $4<p<6$, by using an equivariant version of the Mountain Pass Theorem~\cite{AR,BBF,Rabinowitz86}.

The case $2<p\leq 4$ represents a more intriguing challenge, due to a lack of compactness of Palais-Smale (PS) sequences, and the extension of~\cite[Theorem 1]{BF2} and~\cite[Theorem 1.2]{BF} was later achieved by
D'Aprile and Mugnai in~\cite{Dap1}. The authors overcame the lack of compactness by requiring a control on $\omega$ by the potential $V=m^2$, with $|m|>|\omega|$, times a function depending only on $p$ (when $4<p<6$ this condition leads to the case considered by Benci and Fortunato).

A few years later, Azzolini, Pisani, and Pomponio~\cite{Azzolini, Azzolini2}, continuing along the path laid out by Benci and Fortunato, proved in the electrostatic case the existence of a ground state solution for the nonlinear Klein-Gordon-Maxwell system, refining the relation between $\omega$ and $V$, introduced in~\cite{Dap1}, and studying the limit case when the frequency of the standing wave equals the mass of the charged field.

The range $p\in(2,6)$ is not  random neither restrictive, as shown by D'Aprile and Mugnai in~\cite{Dap2} when $\alpha=0$. They proved nonexistence results based on a suitable Pohožaev identity and showed that whenever $p\leq 2$ or $p\geq 6$, $u=\varphi=0$ is the only solution to~\eqref{eq:problem0}. In~\cite{Dap1}, the authors also applied the arguments of Benci and Fortunato to the case of Schr\"odinger–Maxwell type systems. 

The critical growth case was studied by Cassani in~\cite{Cassani}, by combining a Pohožaev-type argument (to prove nonexistence of solutions with a suitable decay at infinity, as in the case for radially symmetric solutions), and the reduction method by Brezis–Nirenberg~\cite{BN83} (which allows to replace the first equation in~\eqref{eq:problem0} by adding a lower order perturbation and recover the existence of Mountain Pass type solutions).
In particular, in~\cite{Cassani}, it has been showed that whenever $|m|>|\omega|$ and $p=2^*=6$, weak solutions of~\eqref{eq:problem0} vanish identically.

In 2005, Georgiev and Visciglia~\cite{Georgiev}, inspired by the original work of Benci and Fortunato, added an external Coulomb potential in the corresponding Lagrangian density to the KGM equations and stated an existence result for these kinds of systems.
Since 2014, a renewed interest on the Klein-Gordon-Maxwell type equations with non-constant potentials (under suitable conditions) appeared on the scene, starting from the works of He~\cite{He} and Ding and Li~\cite{Ding}.

From a different perspective, a peculiar generalization is the one that involves the fractional Laplacian. Indeed, a long list of possible applications seems to be connected with fractional calculus as well it explained by Di Nezza, Palatucci, and Valdinoci in~\cite{DRV}. In this framework there is a recent and wide literature, to which this paper is inspired. Servadei and Valdinoci generalized in~\cite{Serva15} Laplace equations involving critical non-linearities of Brezis and Nirenberg~\cite{BN83}.
Before that, they also provided in~\cite{Serva12} an existence result for equations driven by a nonlocal integro-differential operator by using both fractional spaces and Mountain Pass Theorem.

Recently, a in-depth analysis of fractional KGM systems has begun as testified, on the one hand, by the work of Zhang~\cite{Zhang}, who obtained a symmetric solution for a fractional KGM system by means of variational methods and, on the other hand, by the work of Miyagaki, de Moura, and Ruviaro~\cite{MdMR19}, who found the positive ground state solution thanks again to the Mountain Pass Theorem.

The literature concerning mixed local-nonlocal operators $\LL$ is pretty vast.
As partially expected, if $\alpha \geq 0$, our existence results (Theorem~\ref{mainthm1} and Theorem~\ref{mainthm2}) are applications of the variational methods introduced in~\cite{AR,BBF,Rabinowitz86} but, differently from the case of bounded domains, we are in principle not allowed to extend trivially the study to the case $-\frac{1}{C}<\alpha<0$, where $C>0$ is the constant of the continuous embedding $H^{1}_{0}(\Omega) \subset H^{s}(\Omega)$, with $\Omega$ bounded domain of $\R^3$.

\noindent Indeed, the situation becomes suddenly more delicate, mainly because $\LL$ is no more (in general) positive definite, the bilinear form naturally associated to it does not induce a scalar product nor a norm, the variational spectrum may exhibit negative eigenvalues and even the maximum principles may fail.
It is well--known that wrong signs of parameters may change the nature of the problem considered, see for example~\cite{VV}, where a well--posed problem becomes ill--posed.

Without aim of completeness, we provide the interested reader with an overview of recent techniques aimed to face these kind of issues, mostly oriented to the (elliptic) PDEs literature.
For a very useful introduction to the variational analysis of nonlinear problem with nonlocal operators, we suggest the book of Molica Bisci, Radulescu, and Servadei~\cite{LIBRO}.

\noindent In the recent paper~\cite{MMV}, Maione, Mugnai, and Vecchi proved the existence of a weak solution of semilinear elliptic boundary value problems driven by a mixed local–nonlocal operator for every possible value of the parameter $\alpha$.
This result is obtained by means of a decomposition of the space of the solutions, deduced from the spectrum of the operator $\LL$.
An extension of this decomposition result for abstract Hilbert spaces can be found in Appendix~\ref{SectA}.

\noindent Concerning interior regularity and maximum principles, Biagi, Dipierro, Valdinoci, and Vecchi~\cite{BDVV} gave several results for elliptic operators with different orders, involving classical and fractional Laplacian.
The same authors also considered in~\cite{BDVVAcc} the qualitative properties of solutions for the same kind of mixed operators as well as the shape optimization problems~\cite{BDVV2, BDVV3}.
Moving in a similar direction, Biswas and Modasiya supplied a Faber-Krahn inequality and a one-dimensional
symmetry result related to the Gibbons’ conjecture ~\cite{BiMo}, and investigated on boundary regularity and overdetermined problems~\cite{BMS}.

\noindent Recently, De Filippis and Mingione~\cite{DeFMin} proved maximal regularity for solutions of variational mixed problems in nonlinear
degenerate cases.
Furthermore, Garain and Kinnuen~\cite{GarainKinnunen} obtained, by adopting purely analytic techniques based on the De Giorgi-Nash-Moser theory, several regularity results such as Harnack inequality for weak solutions and a weak Harnack inequality for weak supersolutions.

\noindent The relation with the mixed Sobolev inequalities was investigated by Garain and Ukhlov~\cite{GarainUkhlov}, who proved that the extremal of such inequalities, associated with an elliptic problem involving the mixed local and nonlocal Laplace operators, is unique up to a multiplicative constant.
From a different point of view, a very interesting approach, which extended the classical Bernstein technique to the setting of integro-differential operators, is due to Cabré, Dipierro, and Valdinoci~\cite{CDV22}.

\noindent Dipierro, Proietti Lippi and Valdinoci proposed a new environment in the mixed operator setting, by considering a new type of suitable Neumann conditions, with important implications to the logistic equation modeling population dynamics~\cite{DPLV1,DPLV2}.
A Brezis–Oswald approach was instead recently developed by Biagi, Mugnai, and Vecchi, leading to the full characterization of the existence of a unique positive weak solution of sublinear Dirichlet problems driven by a mixed local–nonlocal operator~\cite{BDVV5, BMV, BMV2}.

\noindent Another compelling outlook on the topic regards the asymptotic analysis performed by da Silva and Salort~\cite{SilvaSalort} and by Buccheri, da Silva, and de Miranda~\cite{BSM}.
Finally, a more exotic application can be seen in~\cite{SalortVecchi}, where Salort and Vecchi studied the existence of  the solution for Hénon-type equations driven by a nonlinear operator obtained, as before, by combining a local and a nonlocal term.

\smallskip

In this work we reformulate the original problem of Benci and Fortunato~\cite{BF2,BF}, replacing the classical Laplace operator in the Klein-Gordon equation with the mixed local-nonlocal operator $\LL$ defined in~\eqref{Lalfa}.
For the reasons stated in the previous lines, we focus in particular on the case where $\alpha$ can be negative, since negative values of $\alpha$ make the problem much more challenging.

\noindent Following the arguments in~\cite{BF2,BF}, we obtain a generalized wave equation:
\begin{equation}
    \frac{\partial^2 \phi}{\partial t^2}+\LL \phi+m^2\phi-|\phi|^{p-2}\phi=0\,.
\end{equation}
By considering stationary solutions of the form
\begin{equation}
   \phi(x,t)=u(x)\ee^{i\omega t},\quad \text{$u$ real function and }\omega \in \R\,,
\end{equation}
that are called \textit{standing waves}, we get
\begin{equation}
\LL u +(m^2-\omega^2)u=|u|^{p-2}u\,.
\end{equation}

% For the reasons exposed in the previous lines, the aim of the present paper is to study the case in which $\alpha$ may be negative. Indeed negative values of $\alpha$ make the problem much more challenging.

%--------------------------
% Main results
%--------------------------

%\subsection{Main results}

In order to state our main existence results, we consider the case in which $\omega>0$ and $e=-1$, in which the system~\eqref{eq:problem0} reduces to
\begin{equation}\label{eq:problem}
\begin{cases}
\LL u + \left[V-(\omega-\varphi)^2\right]u = |u|^{p-2}u &\textrm{in } \R^3,\\
-\Delta \varphi= (\omega-\varphi) u^2 &\textrm{in } \R^{3}.
\end{cases}
\end{equation}
Indeed, if $(u,\varphi)$ is a solution of~\eqref{eq:problem0} for a fixed $\omega>0$ and $e=-1$, then $(u,\varphi)$ is also a solution of~\eqref{eq:problem0} with $\omega$ replaced by $-\omega$ and $e$ replaced by $-e$. Moreover $(u,-\varphi)$ is a solution of~\eqref{eq:problem0} with either $\omega$ replaced by $-\omega$ or $e$ replaced by $-e$.

In this paper we shall consider two different classes of potentials $V\colon\R^3\to\R$, namely:
\begin{itemize}
	\item[$(\text{I})$] constant positive potentials, that is $V(x)=m^2$, with $m>0$;
	\item[$(\text{II})$] continuous, bounded from below, and coercive potentials $V$, that is potentials satisfying the assumptions:
	\begin{itemize}
	    \item[$\bullet$] $V\in C(\R^3)$;
        \item[$\bullet$] $ V_0:=\inf_{x\in\R^3}V(x)>-\infty$;
        \item[$\bullet$] there exists $h>0$ such that	
        \begin{equation}\label{V}
        \lim_{|y|\to\infty}|\{x\in B_h(y)\,:\,V(x)\le M\}|=0\quad\text{for all $M>V_0$},
	    \end{equation}
which is trivially satisfied when $\displaystyle \lim_{|x|\to\infty}V(x)=\infty$.
 \end{itemize}
\end{itemize}
To handle, as far as possible, these two cases together we set a common variational framework by defining the space $\mathcal D^{1,2}(\R^3)=\overline{C_c^\infty(\R^3)}^{\|\nabla\,(\cdot)\,\|_2}$ and, for $V\in C(\R^3)$ with $V_0=\inf_{x\in\R^3}V(x)>-\infty$, also the space
\[
W=\left\{u\in H^1(\R^3)\,:\, \int_{\R^3}(V-V_0)u^2\,dx<\infty\right\}.
\]
Clearly $W$ trivially reduces to $H^1(\mathbb{R}^3)$ in the case $(\text{I})$.

At first we deal with the case $(\text{I})$, which is the nonlocal version of~\cite{Dap1}, where problem~\eqref{eq:problem} becomes
\begin{equation}\label{eq:problemBIS}
\begin{cases}
\LL u + \left[m^2-(\omega-\varphi)^2\right]u = |u|^{p-2}u &\textrm{in } \R^3,\\
-\Delta \varphi = (\omega-\varphi) u^2 &\textrm{in } \R^{3}.
\end{cases}
\end{equation}
We introduce the function $\alpha_0\colon (0,1)\times (0,\infty)\to (0,\infty)$, which is defined as
\begin{align*}
	\alpha_0(s,t):=s^{-s}(1-s)^{s-1}t^{1-s}\quad\text{for $s\in(0,1)$ and $t\in(0,\infty)$},
\end{align*}
and given $m>0$, $\omega>0$, and $p\in(2,6)$, we set
\begin{align*}
\Omega=\Omega(p,m,\omega):=m^2-\omega^2-\frac{(4-p)^+}{p-2}\omega^2.
\end{align*}
We can now state the first main result of our work.
\begin{theorem}\label{mainthm1}
In the case {\em (I)} assume that
\begin{itemize}
\item[$(a)$] when $p\in[4,6)$ we have $m>\omega>0$,
\item[$(b)$] when $p\in(2,4)$ we have $m\sqrt{p-2}>\sqrt{2}\omega>0$,
\end{itemize}
and that $\alpha>-\alpha_0(s,\Omega)$.
Then problem~\eqref{eq:problemBIS} admits infinitely many radially symmetric solutions $(u_n,\varphi_n)\in H^1(\R^3)\times\mathcal D^{1,2}(\R^3)$.
\end{theorem}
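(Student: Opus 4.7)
The strategy is to reduce the coupled system to a single semilinear equation for $u$, obtain an even $C^1$ functional on $H^1(\R^3)$, restrict to the radial subspace $H^1_r(\R^3)$ to recover compactness, and then apply the symmetric Mountain Pass Theorem. First, for every $u\in H^1(\R^3)$ the Poisson-type equation $-\Delta\varphi+u^2\varphi=\omega u^2$ admits by Lax--Milgram a unique weak solution $\varphi_u\in\mathcal{D}^{1,2}(\R^3)$, and truncation combined with the maximum principle yield the pointwise bounds $0\le\varphi_u\le\omega$. The map $u\mapsto\varphi_u$ is of class $C^1$ and $O(3)$-equivariant, so by the principle of symmetric criticality it suffices to find critical points of
\begin{equation*}
J(u)=\tfrac{1}{2}\|\nabla u\|_2^2+\tfrac{\alpha}{2}\|(-\Delta)^{s/2}u\|_2^2+\tfrac{1}{2}(m^2-\omega^2)\|u\|_2^2+\tfrac{\omega}{2}\int_{\R^3}\varphi_u u^2\,dx-\tfrac{1}{p}\|u\|_p^p
\end{equation*}
restricted to $H^1_r(\R^3)$.

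The analytic backbone is the sharp interpolation inequality
\begin{equation*}
\|(-\Delta)^{s/2}u\|_2^2\le\alpha_0(s,t)^{-1}\bigl(\|\nabla u\|_2^2+t\|u\|_2^2\bigr),\qquad t>0,
\end{equation*}
which follows from $\|(-\Delta)^{s/2}u\|_2^2\le\|u\|_2^{2(1-s)}\|\nabla u\|_2^{2s}$ combined with Young's inequality with conjugate exponents $1/(1-s)$ and $1/s$; the sharp constant is precisely $\alpha_0(s,t)$ as in the statement. Since $\alpha_0(s,\cdot)$ is increasing and $\Omega\le m^2-\omega^2$, the assumption $\alpha>-\alpha_0(s,\Omega)$ in particular makes the bilinear form $\|\nabla u\|_2^2+\alpha\|(-\Delta)^{s/2}u\|_2^2+(m^2-\omega^2)\|u\|_2^2$ equivalent to $\|u\|_{H^1}^2$. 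Together with $\varphi_u\ge 0$ and the Sobolev embedding, this delivers the Mountain Pass geometry on $H^1_r(\R^3)$: $J$ is bounded below by a positive constant on a small sphere, and $J(tu)\to -\infty$ as $t\to\infty$ for any fixed $u\ne 0$ because the $L^p$ term dominates.

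The decisive step is boundedness of Palais--Smale sequences. Using $\langle J'(u),u\rangle=\|\nabla u\|_2^2+\alpha\|(-\Delta)^{s/2}u\|_2^2+\int_{\R^3}[m^2-(\omega-\varphi_u)^2]u^2\,dx-\|u\|_p^p$, a direct calculation yields
\begin{equation*}
pJ(u)-\langle J'(u),u\rangle=\tfrac{p-2}{2}\bigl[\|\nabla u\|_2^2+\alpha\|(-\Delta)^{s/2}u\|_2^2+(m^2-\omega^2)\|u\|_2^2\bigr]+\tfrac{p-4}{2}\omega\!\int_{\R^3}\!\varphi_u u^2\,dx+\!\int_{\R^3}\!\varphi_u^2 u^2\,dx.
\end{equation*}
In case $(a)$, $p\ge 4$, both extra summands are nonnegative and $\Omega=m^2-\omega^2$ appears at once. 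In case $(b)$, the bound $\varphi_u\le\omega$ gives $\tfrac{p-4}{2}\omega\int\varphi_u u^2\,dx\ge\tfrac{p-4}{2}\omega^2\|u\|_2^2$, and the whole expression rearranges into $\tfrac{p-2}{2}\bigl[\|\nabla u\|_2^2+\alpha\|(-\Delta)^{s/2}u\|_2^2+\Omega\|u\|_2^2\bigr]$; since hypotheses $(a)$--$(b)$ ensure $\Omega>0$, the sharp interpolation at $t=\Omega$ yields $\|u_n\|_{H^1}\le C$ precisely under the threshold $\alpha>-\alpha_0(s,\Omega)$.

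Compactness is then supplied by the Strauss embedding $H^1_r(\R^3)\hookrightarrow L^p(\R^3)$ for $p\in(2,6)$: this passes to the limit in $|u_n|^{p-2}u_n$ and, coupled with weak continuity of $u\mapsto\varphi_u$, yields the $(PS)_c$ condition on $H^1_r(\R^3)$. Invoking the symmetric Mountain Pass Theorem of Ambrosetti--Rabinowitz, applied with the standard sequence of finite-dimensional subspaces of radial test functions on which $J\to-\infty$ outside a ball, produces an unbounded sequence of critical levels and correspondingly infinitely many pairs $\pm u_n$; the associated $(u_n,\varphi_{u_n})$ are the solutions sought. The hardest part is the regime $\alpha<0$: the bilinear form of $\LL$ neither induces a scalar product nor is \emph{a priori} coercive, so the usual variational machinery would break down; both the Mountain Pass geometry and the PS boundedness must be rescued by the sharp interpolation constant $\alpha_0(s,\Omega)$, and it is exactly the case $(b)$ PS identity that forces the specific replacement of $m^2-\omega^2$ by $\Omega$ in the threshold condition.
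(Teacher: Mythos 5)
Your proposal is correct and follows essentially the same route as the paper: reduction to the functional $J$ via the unique solution $\varphi_u$ of the Poisson equation, restriction to $H^1_r(\R^3)$ by $O(3)$-invariance, Mountain Pass geometry from coercivity of $\mathcal B_\alpha(u,u)+(m^2-\omega^2)\|u\|_2^2$, the same identity for $pJ(u)-J'(u)[u]$ with the bound $\varphi_u\le\omega$ producing exactly $\Omega$ in case $(b)$, and compactness from the radial embedding. The only (cosmetic) difference is that you use the optimized interpolation inequality $\|(-\Delta)^{s/2}u\|_2^2\le\alpha_0(s,t)^{-1}(\|\nabla u\|_2^2+t\|u\|_2^2)$ directly, whereas the paper carries the auxiliary parameter $\varepsilon$ and solves a two-inequality system to locate an admissible $\varepsilon$ --- the two are equivalent.
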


Note that under the assumptions of Theorem~\ref{mainthm1}, $\Omega\in(0,m^2)$, so that $\alpha_0(s,\Omega)$ is well defined.
\smallskip

A comparison with the classical literature is now in order.
Let us first observe that, when $\alpha=0$ and $p\in(4,6)$, Theorem
\ref{mainthm1} recovers the original results of Benci and Fortunato~\cite[Theorem 1]{BF2} and~\cite[Theorem 1.2]{BF}.
Moreover, the subsequent work of D'Aprile and Mugnai~\cite{Dap1} is also fully recovered when $\alpha=0$, in the complete range $p\in(2,6)$.
We recall that the authors proved in~\cite{Dap2} that the interval $(2,6)$ is sharp, in the sense that as long as $p\le 2$ or $p\ge 6$, the system~\eqref{eq:problemBIS} admits only the trivial solution.
Unfortunately, the generalization of this result to our context of mixed local-nonlocal operators is non--trivial. However, we feel we can conjecture that the interval $(2,6)$ is sharp even in this more general context.

\medskip

\begin{figure}[ht!]
\centering
\begin{subfigure}{.45\textwidth}
  \centering
  \includegraphics[width=1\linewidth]{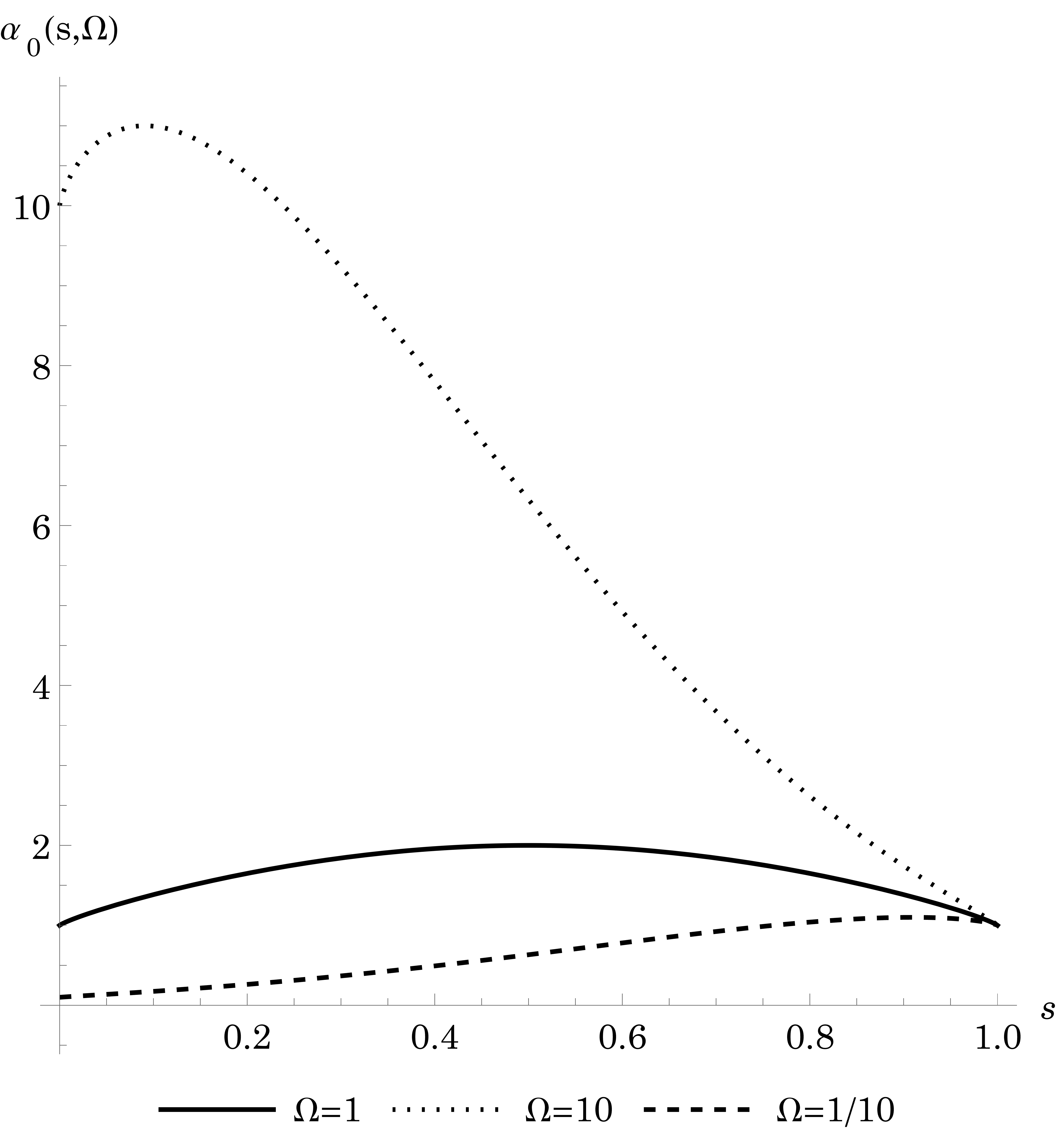}
  \caption{}
  \label{fig:sub1}
\end{subfigure}%
\begin{subfigure}{.45\textwidth}
  \centering
  \includegraphics[width=1\linewidth]{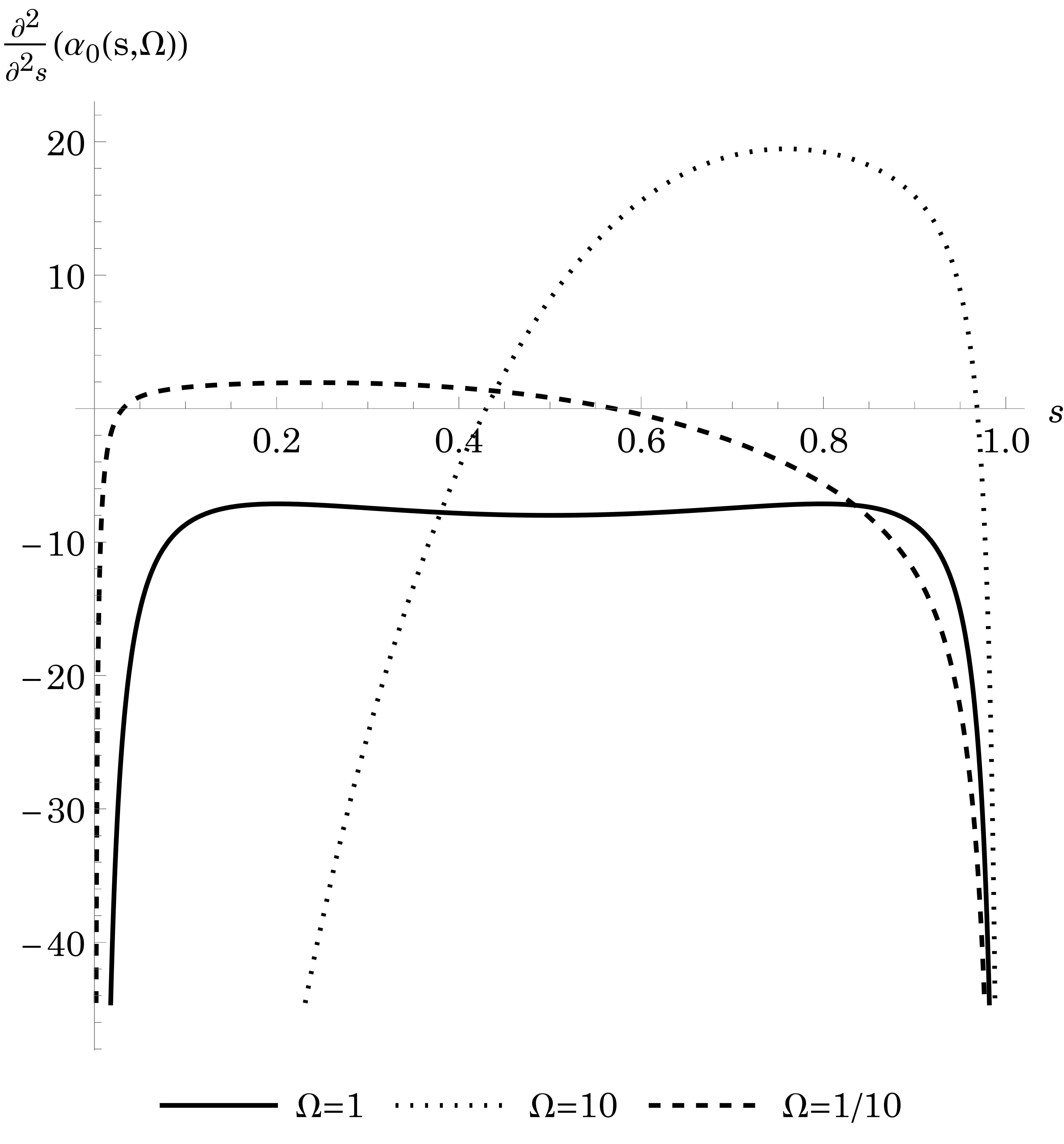}
  \caption{}
  \label{fig:sub2}
\end{subfigure}
\caption{The graph (\textsc{a}) represents the behavior of $\alpha_0(s,\Omega)$ in the interval $s\in(0,1)$, for three different fixed values of $\Omega$: $1,10,\frac{1}{10}$.
For the same values of $\Omega$, the graph (\textsc{b}) provides a representation of the second derivative of $\alpha_0(s,\Omega)$ with respect to $s$.
As one can appreciate there are two flexes for $\Omega=10$ and $\Omega=1/10$.}
\label{figure}
\end{figure}
A more in-depth study regarding the role of $\alpha_0$ is, in our opinion, important and useful to completely understand the significance of the theorem above.

\noindent As one can observe from Fig.~\ref{figure}, the limits at the boundary are
\begin{align}
    \lim_{s\to 0^+} \alpha_0(s,\Omega)&=\Omega=m^2-\omega^2-\dfrac{(4-p)^+}{p-2}\omega^2\,;\\
    \lim_{s \to 1^-} \alpha_0(s,\Omega)&= 1\,.
\end{align}
In particular, we want to underline how for $s \to 0^+$ and $s\to 1^-$ we basically recover the same results of~\cite{BF2,BF,Dap1}. On the one hand, $s=0$ formally corresponds to the operator $-\Delta u+\alpha u$. In this case, by~\cite{BF2,BF,Dap1} the system~\eqref{eq:problem} has infinitely many solutions if
\[
\alpha+m^2>\omega^2+\frac{(4-p)^+}{p-2}\omega^2,\quad\text{that is}\quad\alpha>-\Omega\,.
\]
On the other hand, $s=1$ formally corresponds to the operator $-(1+\alpha)\Delta u$, which is positively definite if and only if
\[
1+\alpha>0,\quad\text{that is}\quad\alpha>-1\,.
\]
Hence the range for $\alpha$, given by the assumption $\alpha>-\alpha_0(\Omega,s)$, seems to be sharp, at least when $s\to0^+$ and $s\to1^-$.
We conjecture that the range is sharp for all $s\in(0,1)$.
\medskip

The second main result of the paper is as follows.
\begin{theorem}
\label{mainthm2}
In the case {\em (II)} for all $p\in(2,6)$ and $\alpha\in\R$ problem~\eqref{eq:problem} admits infinitely many solutions $(u_n,\varphi_n)\in W\times\mathcal D^{1,2}(\R^3)$.
\end{theorem}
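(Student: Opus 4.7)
The plan is to combine the classical Benci--Fortunato reduction with a Symmetric Mountain Pass argument on the reduced functional, exploiting the compactness granted by the coercive potential in order to accommodate an arbitrary $\alpha\in\R$. First, I would equip $W$ with the Hilbert norm $\|u\|_W^2:=\|\nabla u\|_2^2+\int_{\R^3}(V-V_0+1)u^2\,dx$, and recall that assumption~(II), in the spirit of Bartsch--Wang, yields the compact embedding $W\hookrightarrow L^q(\R^3)$ for every $q\in[2,6)$. For each $u\in W$, Lax--Milgram applied to $\mathcal D^{1,2}(\R^3)$ endowed with the equivalent inner product $(\varphi,\psi)\mapsto\int\nabla\varphi\cdot\nabla\psi+\int u^2\varphi\psi$ produces a unique $\Phi[u]\in\mathcal D^{1,2}(\R^3)$ solving $-\Delta\Phi+u^2\Phi=\omega u^2$. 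The maximum principle gives $0\le\Phi[u]\le\omega$ and the implicit function theorem makes $u\mapsto\Phi[u]$ of class $C^1$.

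Substituting $\Phi[u]$ into the natural Lagrangian of~\eqref{eq:problem} and simplifying via the equation for $\Phi[u]$ yields the reduced energy
\begin{equation*}
J(u)=\tfrac{1}{2}\|\nabla u\|_2^2+\tfrac{\alpha}{2}[u]_s^2+\tfrac{1}{2}\int_{\R^3}(V-\omega^2)u^2\,dx+\tfrac{\omega}{2}\int_{\R^3}\Phi[u]\,u^2\,dx-\tfrac{1}{p}\|u\|_p^p,
\end{equation*}
a $C^1$ even functional on $W$ whose critical points correspond to weak solutions $(u,\Phi[u])$ of~\eqref{eq:problem}, where $[u]_s^2$ stands for the (constant times) Gagliardo seminorm associated to $(-\Delta)^s$. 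The technical lynchpin needed to control the nonlocal term when $\alpha<0$ is the Fourier-based interpolation $[u]_s^2\le\varepsilon\|\nabla u\|_2^2+C_\varepsilon\|u\|_2^2$ valid for every $\varepsilon>0$; combined with the coercivity~\eqref{V} it bounds $J$ from below regardless of the sign of $\alpha$.

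The Symmetric Mountain Pass geometry follows from $p>2$: boundedness from below on small $W$-balls uses $\|u\|_p^p\le C\|u\|_W^p$, while on any finite-dimensional subspace $E_n\subset W$ norm equivalence ensures that $-\tfrac{1}{p}\|u\|_p^p$ dominates every quadratic contribution as $\|u\|_W\to\infty$, so that $J\to-\infty$ on $E_n$. For the Palais--Smale condition, given $(u_n)\subset W$ with $J(u_n)$ bounded and $J'(u_n)\to 0$ in $W^*$, the identity
\begin{equation*}
J(u_n)-\tfrac{1}{2}J'(u_n)[u_n]=\Bigl(\tfrac{1}{2}-\tfrac{1}{p}\Bigr)\|u_n\|_p^p-\tfrac{1}{2}\int_{\R^3}\Phi[u_n](\omega-\Phi[u_n])u_n^2\,dx,
\end{equation*}
together with $0\le\Phi[u_n](\omega-\Phi[u_n])\le\omega^2/4$ and the interpolation inequality, yields boundedness of $(u_n)$ in $W$. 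Passing to a subsequence, the compact embedding gives $u_n\to u$ strongly in $L^q(\R^3)$ for $q\in[2,6)$, and continuity of the reduction provides $\Phi[u_n]\to\Phi[u]$ in $\mathcal D^{1,2}(\R^3)$.

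Strong convergence $u_n\to u$ in $W$ is then obtained by testing $J'(u_n)-J'(u)$ against $u_n-u$: all potential, lower-order, and nonlinear contributions tend to $0$ by the compact embedding, leaving only $\|\nabla(u_n-u)\|_2^2+\alpha[u_n-u]_s^2=o(1)$, and one more application of the interpolation inequality with $\varepsilon$ sufficiently small converts this into $\|u_n-u\|_W\to 0$. The main anticipated obstacle is precisely this last step, namely converting a quadratic form that need not be positive definite into genuine $W$-convergence; the combination of the interpolation inequality with the compactness coming from~\eqref{V} is what resolves it. The Symmetric Mountain Pass Theorem (\cite{AR,Rabinowitz86}) applied to the even $C^1$ functional $J$ satisfying (PS) then yields an unbounded sequence of critical values, hence infinitely many solutions $(u_n,\Phi[u_n])\in W\times\mathcal D^{1,2}(\R^3)$ to~\eqref{eq:problem}.
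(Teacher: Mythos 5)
Your reduction to the functional $J$, the compact embedding of $W$ from the coercivity condition \eqref{V}, and the final strong-convergence step of the Palais--Smale argument all match the paper. However, there is a genuine gap in the Mountain Pass geometry. You apply the symmetric Mountain Pass Theorem with $X_2=W$, so you need $\inf J(S_\varrho)\ge\delta>0$ on a full sphere, and you claim this follows from the interpolation $[u]_s^2\le\varepsilon\|\nabla u\|_2^2+C_\varepsilon\|u\|_2^2$ ``combined with the coercivity \eqref{V}''. It does not: after absorbing the term $\alpha^-[u]_s^2$ you are left with the quadratic form $\tfrac12\mathcal B_{\alpha,V}(u,u)-\tfrac{\omega^2}{2}\|u\|_2^2$, whose zeroth-order coefficient is controlled only by $V_0-\omega^2-\alpha^- C_\varepsilon$, and assumption (II) allows $V_0$ to be any real number while \eqref{V} constrains sublevel sets at infinity, not the pointwise size of $V$. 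Whenever the operator $\mathcal L_{\alpha,V}$ has a nonpositive eigenvalue (which certainly occurs for $V_0$ sufficiently negative, or for $\alpha$ sufficiently negative --- precisely the regime the theorem is claiming to cover for \emph{all} $\alpha\in\R$), there is a direction $u_0$ with $\mathcal B_{\alpha,V}(u_0,u_0)\le 0$; since $\varphi_{u}\le\omega$ on $\{u\neq0\}$ one then gets $J(tu_0)\le\tfrac{t^2}{2}\mathcal B_{\alpha,V}(u_0,u_0)-\tfrac{t^p}{p}\|u_0\|_p^p<0$ for every $t>0$, so condition $(i)$ of Theorem~\ref{Rabinowitz86} fails on every sphere. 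This is exactly why the paper invokes the spectral decomposition of Proposition~\ref{thm:spc}: one chooses $k_0$ with $\lambda_{k_0}>\omega^2$, splits $W=H_{k_0}\oplus\mathbb P_{k_0}$ with $\dim H_{k_0}<\infty$, and proves coercivity of $\mathcal B_{\alpha,V}(u,u)-\omega^2\|u\|_2^2$ only on $\mathbb P_{k_0}$, applying the Mountain Pass Theorem in the form $X=X_1\oplus X_2$ with $X_1=H_{k_0}$. This finite-dimensional splitting is the essential missing idea in your proposal.

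A secondary, more repairable, issue: your boundedness of Palais--Smale sequences is asserted too quickly. The identity $J(u_n)-\tfrac12 J'(u_n)[u_n]=(\tfrac12-\tfrac1p)\|u_n\|_p^p-\tfrac12\int\varphi_{u_n}(\omega-\varphi_{u_n})u_n^2\,dx$ only yields an estimate of the form $c_1\|u_n\|_W^2\le C(1+\|u_n\|_W)+c_2\|u_n\|_2^2$ after combining with the lower bound for the quadratic part, and since $\|u_n\|_2\le\|u_n\|_W$ this does not close. The paper resolves this by a normalization argument: assuming $\|u_n\|_W\to\infty$, setting $w_n=u_n/\|u_n\|_W$, and using the compact embedding to show simultaneously that $\|w\|_2^2\ge c_1/c_2>0$ and that $\|w_n\|_p\to0$, a contradiction. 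Some device of this kind is needed; it is not automatic from the interpolation inequality.
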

We remark that, as in the case of the Theorem~\ref{mainthm1}, Theorem~\ref{mainthm2} also recovers the classical works of Ding and Li~\cite{Ding} and He~\cite{He}, when the real parameter $\alpha$ approaches the value $0$.
We also point out that a comparison with the literature devoted to the case in which the operator $\LL$ is purely fractional (i.e. $\LL=(-\Delta)^s$) is not possible, since the parameter $\alpha$ is only coupled to the nonlocal part of the operator, while the local part of $\LL$ is fixed.

As in~\cite{BF2,BF,Dap1} and~\cite{Ding,He}, the proof of Theorems~\ref{mainthm1} and~\ref{mainthm2} are based on an equivariant version of the Mountain Pass Theorem (see~\cite{AR,BBF,Rabinowitz86}). In the forthcoming work~\cite{CCMV23}, the authors shall explore, by using variational techniques, the case of Schr\"odinger–Maxwell equation driven by mixed local-nonlocal operators.
\medskip

The paper is simply organized as follows.
Section~\ref{Sect2} is devoted to some preliminary results, which apply to both cases $(\text{I})$ and $(\text{II})$.
In Sections~\ref{Sect3} and~\ref{Sect4} we shall respectively consider the cases $(\text{I})$ and $(\text{II})$, giving the proofs of Theorems~\ref{mainthm1} and Theorem~\ref{mainthm2}.
A comprehensive overview of spectral theory for mixed local-nonlocal operators finally appears in Appendix~\ref{SectA}.

%---------------------------------
% Assumptions
%---------------------------------

\section{Assumptions, notations, and preliminary results}\label{Sect2}

We recall that the Sobolev space $H^1(\R^3)$ is defined as
\begin{equation*}
H^1(\R^3)=\{u\in L^2(\R^3)\,:\,\nabla u\in L^2(\R^3;\R^3)\},
\end{equation*}
and it is a Hilbert space endowed with the norm
\begin{align*}
\|u\|_{H^1}^2:=\|u\|_2^2+\|\nabla u\|_2^2\quad\text{for $u\in H^1(\R^3)$}.
\end{align*}
We denote by $\mathcal F$ the Fourier transform, defined for functions $\varphi\in\mathcal S(\mathbb{R}^3)$ (the Schwartz space of rapidly decreasing smooth functions) by
\begin{equation*}
\mathcal F\varphi(\xi):=\frac{1}{(2\pi)^\frac{3}{2}}\int_{\R^3}e^{-i\xi\cdot x}\varphi(x) \,dx\quad\text{for $\xi\in\R^3$},
\end{equation*}
and then extended by density to the space of tempered distributions.
By Plancharel theorem $\mathcal F$ is an isometric isomorphism from $L^2(\R^3;\C)$ onto $L^2(\R^3;\C)$.

Given any $s\in(0,1)$, the fractional Sobolev space $H^s(\R^3)$ is equivalently defined as
\begin{equation*}
H^s(\R^3)=\left\{u\in L^2(\R^3)\,:\,\int_{\R^3}(1+|\xi|^{2s})|\mathcal Fu(\xi)|^2\,d\xi<\infty\right\},
\end{equation*}
see e.g.~\cite[Section 3]{DRV}, and it is a Hilbert space when endowed with the norm
\begin{align*}
\|u\|_{H^s}^2:=\int_{\R^3}(1+|\xi|^{2s})|\mathcal Fu(\xi)|^2\,d\xi\quad\text{for $u\in H^s(\R^3)$}.
\end{align*}
Notice that $H^1(\R^3)$ is continuously embedded into $H^s(\R^3)$ by Plancharel theorem, since for all $u\in H^1(\R^3)$
\begin{align*}
\int_{\R^3}|\xi|^{2s}|\mathcal Fu(\xi)|^2\,d\xi&\le (1-s)\int_{\R^3}|\mathcal Fu(\xi)|^2\,d\xi+s\int_{\R^3}|\xi|^2|\mathcal Fu(\xi)|^2\,d\xi\\
&=(1-s)\|u\|_2^2+s\|\nabla u\|_2^2.
\end{align*}

Let $(-\Delta)^su$ denote the fractional Laplacian of $u$, which is defined via Fourier transform for functions $\varphi\in\mathcal S(\mathbb{R}^3)$ as
\[
(-\Delta)^s\varphi(x)=\mathcal F^{-1}(|\xi|^{2s}\mathcal F\varphi(\xi))(x)\quad\text{for $x\in\R^3$}\,.
\]
By Plancherel theorem we have
\begin{equation*}
H^s(\R^3)=\{u\in L^2(\R^3)\,:\,(-\Delta)^\frac{s}{2}u\in L^2(\R^3)\}
\end{equation*}
and
\begin{equation*}
\|u\|_{H^s}^2=\|u\|_2^2+\|(-\Delta)^{\frac{s}{2}}u\|_2^2.
\end{equation*}
In particular, for all $u\in H^1(\R^3)$ and $\varepsilon>0$, we have
\begin{equation}\label{young}
\|(-\Delta)^{\frac{s}{2}}u\|_2^2=\int_{\R^3}|\xi|^{2s}|\mathcal Fu(\xi)|^2\,d\xi\le (1-s)\varepsilon^{-\frac{s}{1-s}}\|u\|_2^2+s\varepsilon\|\nabla u\|_2^2.
\end{equation}
Therefore, the fractional Laplacian can be interpreted as an operator
$$(-\Delta)^s\colon H^s(\R^3)\to H^{-s}(\R^3):=(H^s(\R^3))',$$
defined for all $u,v\in H^s(\R^3)$ as
\begin{equation}
\langle (-\Delta)^su,v\rangle_{H^{-s}(\R^3)\times H^s(\R^3)}:=\int_{\R^3}(-\Delta)^\frac{s}{2}u(-\Delta)^\frac{s}{2}v\,dx.
\end{equation}

\begin{remark}
We recall that the fractional Sobolev space $H^s(\R^3)$ can also be defined via the Gagliardo seminorm $[\,\cdot\,]_{s,2}$ as
\begin{equation}
H^s(\R^3):=\left\{u\in L^2(\R^3)\,:\,[u]_{s,2}^2:=\int_{\R^3}\int_{\R^3}\frac{|u(x)-u(x)|^2}{|x-y|^{3+2s}} \,dx \,dy<\infty\right\}.
\end{equation}
Indeed, we have
\begin{equation}
\frac{1}{2}C(s)[u]_{s,2}^2=\int_{\R^3}|\xi|^{2s}|\mathcal Fu(\xi)|^2\,d\xi\quad\text{for all $u\in H^s(\R^3)$},
\end{equation}
where the constant $C(s)$ is defined as
\begin{equation}\label{eq:Cs}
C(s):=\left(\int_{\R^3}\frac{1-\cos (x_1)}{|x|^{3+2s}} \,dx\right)^{-1}\!\!\!\!\!\!\!,
\end{equation}
see, for example~\cite[Proposition 3.4 and Proposition 3.6]{DRV}. In particular, the fractional Laplacian can be defined for $\varphi\in\mathcal S(\R^3)$ as
\begin{equation}(-\Delta)^s \varphi(x) := C(s) \,\mbox{P.V.}\int_{\R^3}\frac{\varphi(x)-\varphi(y)}{|x-y|^{3+2s}} \,dy\quad\text{for $x\in\R^3$},
\end{equation}
where P.V. denotes the Cauchy principal value, that is
\begin{equation}
\mbox{P.V.}\int_{\R^3}
\frac{u(x)-u(y)}{|x-y|^{3+2s}} \,dy:=\lim_{\varepsilon\to 0^+}\int_{\{y\in \R^3\,:\,|y-x|\ge \varepsilon\}}\frac{u(x)-u(y)}{|x-y|^{3+2s}} \,dy,
\end{equation}
and the constant $C(s)$ is the one defined by~\eqref{eq:Cs}.
\end{remark}

For all $\alpha\in\R$ we define the mixed local-nonlocal operator $\mathcal L_\alpha$ as
\begin{equation}
\mathcal L_\alpha u := -\Delta u +\alpha (-\Delta)^s u,
\end{equation}
where $\Delta u$ denotes the classical Laplace operator, while $(-\Delta)^su$ is the fractional Laplacian. As before we can interpret the mixed local-nonlocal operator $\mathcal L_\alpha$ as an operator
$$\mathcal L_\alpha\colon H^1(\R^3)\to H^{-1}(\R^3):=(H^1(\R^3))',$$
to which we can naturally associate a bilinear form as follows.

\begin{definition}\label{def:Balpha}
The bilinear form $\mathcal B_\alpha\colon H^1(\R^3)\times H^1(\R^3)\to \R$ (associated to the operator $\mathcal L_\alpha$) is defined for all $u,v\in H^1(\R^3)$ as
\begin{align*}
\mathcal B_\alpha(u,v):=&\int_{\R^3}\langle\nabla u,\nabla v\rangle \,dx+\alpha\int_{\R^3}(-\Delta)^\frac{s}{2}u\,(-\Delta)^\frac{s}{2}v\,dx\\
=&\int_{\R^3}\langle\nabla u,\nabla v\rangle \,dx+\alpha\frac{C(s)}{2}\int_{\R^3}\int_{\R^3}\frac{(u(x)-u(y))(v(x)-v(y))}{|x-y|^{3+2s}}\,dx\,dy.
\end{align*}
Clearly $\mathcal B_\alpha$ is well defined and continuous on $H^1(\R^3)\times H^1(\R^3)$.
\end{definition}

Let $V\in C(\R^3)$ be a potential with
\begin{equation*}
V_0:=\inf_{x\in\R^3}V(x)>-\infty
\end{equation*}
(clearly this is satisfied for both cases (I) and (II)). The space of solutions $u$ of problem~\eqref{eq:problem} is defined as
$$W:=\left\{u\in H^1(\R^3)\,:\,\int_{\R^3}(V-V_0)u^2\,dx<\infty\right\},$$
endowed with the norm
\[
\|u\|_W^2:=\|u\|_2^2+\|\nabla u\|_2^2+\int_{\R^3}(V-V_0)u^2\,dx\,.
\]

\begin{lemma}\label{lem:den}
$W$ is a Hilbert space with respect to $\|\cdot\|_W$. Moreover, the space $C_c^\infty(\R^3)\subset W$ is dense in $W$.
\end{lemma}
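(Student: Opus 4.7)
The plan is to split the statement into three parts: (i) $W$ is a pre-Hilbert space, (ii) it is complete, and (iii) $C_c^\infty(\R^3)$ is dense. Part (i) is routine once we observe that $V-V_0 \ge 0$, so the bilinear form
\[
\langle u, v\rangle_W := \int_{\R^3} uv\,dx + \int_{\R^3}\nabla u\cdot\nabla v\,dx + \int_{\R^3}(V-V_0)uv\,dx
\]
is well defined on $W\times W$ (via the pointwise inequality $|(V-V_0)uv|\le \tfrac12(V-V_0)(u^2+v^2)$), symmetric, and positive definite, and the associated norm is $\|\cdot\|_W$. The fact that $W$ is closed under sums follows from $(u+v)^2\le 2u^2+2v^2$.

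For part (ii), let $(u_n)\subset W$ be Cauchy. Since $\|\cdot\|_{H^1}\le\|\cdot\|_W$, $(u_n)$ is Cauchy in $H^1(\R^3)$, hence converges to some $u\in H^1(\R^3)$ and, up to a subsequence, pointwise a.e. Fixing $m$ large, Fatou's lemma gives
\[
\int_{\R^3}(V-V_0)(u-u_m)^2\,dx \le \liminf_{n\to\infty}\int_{\R^3}(V-V_0)(u_n-u_m)^2\,dx,
\]
so the Cauchy property in $W$ together with the case $m$ fixed shows $u\in W$ and $u_m\to u$ in $W$.

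For part (iii), I would proceed in two standard steps. \emph{Truncation.} Pick a cutoff $\eta\in C_c^\infty(\R^3)$ with $\eta\equiv 1$ on $B_1$, $0\le\eta\le 1$, supp$\,\eta\subset B_2$, and set $\eta_R(x)=\eta(x/R)$, $u_R:=\eta_R u$. Then $u_R\in W$ has compact support, $u_R\to u$ pointwise, $|u_R|\le|u|$, and $\nabla u_R=\eta_R\nabla u+u\nabla\eta_R$ with $\|\,u\nabla\eta_R\|_2\le (C/R)\|u\|_2$. Dominated convergence (applied to each of $|u|^2(1-\eta_R)^2$, $|\nabla u|^2(1-\eta_R)^2$, and $(V-V_0)u^2(1-\eta_R)^2$, all bounded by integrable functions) yields $u_R\to u$ in $W$.

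\emph{Mollification.} For a compactly supported $u\in W$, let $u^\varepsilon:=\rho_\varepsilon\ast u$ with a standard mollifier. Then $u^\varepsilon\in C_c^\infty(\R^3)$ and $u^\varepsilon\to u$ in $H^1(\R^3)$. All the functions $u^\varepsilon$ and $u$ are supported in a single compact set $K$ (for $\varepsilon$ small), on which $V-V_0$ is bounded by the continuity of $V$, say by $M_K$. Therefore
\[
\int_{\R^3}(V-V_0)(u^\varepsilon-u)^2\,dx \le M_K\|u^\varepsilon-u\|_2^2\xrightarrow[\varepsilon\to 0^+]{}0,
\]
which together with the $H^1$-convergence gives $u^\varepsilon\to u$ in $W$. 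Combining the two steps yields density of $C_c^\infty(\R^3)$ in $W$.

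The only mild subtlety is ensuring the potential term survives each approximation; in the truncation step this is handled by dominated convergence (the pointwise bound $(V-V_0)u^2$ is integrable by definition of $W$), and in the mollification step by the continuity of $V$ on the common compact support. No quantitative information about the growth of $V$ at infinity is needed for this lemma.
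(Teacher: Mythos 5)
Your proposal is correct and follows essentially the same route as the paper: the density part is proved by exactly the same truncation-plus-mollification scheme, using that $V-V_0$ is bounded on the common compact support to upgrade $H^1$-convergence of the mollifications to $W$-convergence. Your completeness argument via Fatou's lemma on an a.e.-convergent subsequence is in fact slightly more careful than the paper's (which asserts closedness of $W$ in $H^1$ starting from a sequence already assumed to converge in the $\|\cdot\|_W$-norm), but it is the same idea of reducing to completeness of $H^1$ and then controlling the potential term.
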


\begin{proof}
It is clear that $W\subset H^1(\R^3)$ is a linear subspace of $H^1(\R^3)$, and the map $\|\cdot\|_W\colon W\to [0,\infty)$ is a norm on $W$ which is induced by a scalar product. We need just to show that $W$ is a closed subspace of $H^1(\R^3)$. Let $(u_k)_k\subset W$ and $u\in H^1(\R^3)$ be such that $\|u_k-u\|_W\to 0$ as $k\to\infty$. Then, for a fixed $k_0\in\N$
$$(V-V_0)u^2\le 2(V-V_0)(u_{k_0}-u)^2+2(V-V_0)u_{k_0}^2\quad\text{a.e. in $\R^3$},$$
which implies that
$$\int_{\R^3}(V-V_0)u^2\,dx\le 2\int_{\R^3}(V-V_0)(u_{k_0}-u)^2\,dx+2\int_{\R^3}(V-V_0)u_{k_0}^2\,dx.$$
Hence $u\in W$, i.e., $W$ is a closed subspace of $H^1(\R^3)$.

We first notice that $C_c^\infty(\R^3)\subset W$, being $C_c^\infty(\R^3)\subset H^1(\R^3)$ and
$$\int_{\R^3}(V-V_0)u^2\,dx\le \|u\|_{L^\infty(\R^3)}^2\max_{\textrm{supp}\, u}(V-V_0)<\infty$$
for all $u\in C_c^\infty(\R^3)$. Let $u\in W$ and consider a sequence $(\chi_k)_k\subset C_c^\infty(\R^3)$ of functions satisfying
$0\le \chi_k\le 1$ in $\R^3$, $\chi_k=1$ in $ B_k(0)$, and $\chi_k=0$ in $\R^3\setminus B_{k+1}(0)$. Clearly $(u\chi_k)_k\subset W$, every $\chi_k u$ has compact support in $\R^3$, and $u\chi_k\to u$ in $W$ as $k\to\infty$. Hence for all $\varepsilon>0$ there exists $k_0\in \N$ such that
$$\|\chi_{k_0}u-u\|_W<\frac{\varepsilon}{2}.$$
Let $(\rho_j)_j\subset C_c^\infty(\R^3)$ be a sequence of mollifiers in $\R^3$. Then $(\rho_j*(\chi_{k_0}u))_j\subset C_c^\infty(\R^3)$ and $\rho_j*(\chi_{k_0}u)\to \chi_{k_0}u$ a.e. in $\R^3$ as $j\to \infty$. Hence, there exists $j_0\in\N$ such that
$$\|\rho_{j_0}*(\chi_{k_0}u)-\chi_{k_0}u\|_{H^1(\R^3)}<\frac{\varepsilon}{2(1+C_{V,k_0})},\quad C_{V,k_0}^2:=\max_{\overline B_{k_0+1}(0)}(V-V_0).$$
Hence
$$\|\rho_{j_0}*(\chi_{k_0}u)-\chi_{k_0}u\|_W\le (1+C_{V,k_0})\|\rho_{j_0}*(\chi_{k_0}u)-\chi_{k_0}u\|_{H^1(\R^3)}<\frac{\varepsilon}{2},$$
which gives
$$\|\rho_{j_0}*(\chi_{k_0}u)-u\|_W<\varepsilon.$$
Therefore $C_c^\infty(\R^3)$ is dense in $W$.
\end{proof}

Since
$$\|u\|_{H^1}\le \|u\|_W\quad\text{for all $u\in W$},$$
we derive that the embedding $W\subset L^p(\R^3)$ is continuous and dense for all $p\in[2,6]$, being $6=2^*$ the critical Sobolev exponent for $n=3$. In particular, there exists a constant $C_p>0$ such that
\begin{equation}\label{eq:Sobolev}
\|u\|_p\le C_p\|u\|_W\quad\text{for all $u\in W$}.
\end{equation}

The space of solutions for the electrical potential $\varphi$ of problem~\eqref{eq:problem} is the Hilbert space, already introduced in Section~1,
\begin{equation*}
\mathcal D^{1,2}(\R^3)=\overline{C_c^\infty(\R^3)}^{\|\nabla\,(\cdot)\,\|_2},
\end{equation*}
endowed with the norm
$$\|\varphi\|_{\mathcal D^{1,2}}:=\|\nabla\varphi\|_2\quad\text{for all $\varphi\in\mathcal D^{1,2}(\R^3)$}.$$
Since in the whole space $\R^3$ the Poincar\'e inequality does not hold, we get
\begin{equation}
\mathcal D^{1,2}(\R^3)\neq H^1_0(\R^3)=H^1(\R^3).
\end{equation}
In any case, $\mathcal D^{1,2}(\R^3)$ is continuously embedded into $L^6(\R^3)$, i.e., there exists a constant $C_D>0$ such that
$$\|\varphi\|_6\le C_D\|\varphi\|_{\mathcal D^{1,2}}\quad\text{for all $\varphi\in\mathcal D^{1,2}(\R^3)$}.$$

We can now introduce the definition of weak solutions of~\eqref{eq:problem}.
\begin{definition}\label{eq:DefWeakSol}
A pair $(u,\varphi)\in W\times\mathcal D^{1,2}(\R^3)$ is called a weak solution of~\eqref{eq:problem} if
\begin{equation}\label{eq:u}
\mathcal B_{\alpha}(u,v) + \int_{\R^3}V uv \,dx+\int_{\R^3}(\omega-\varphi)^2 uv \,dx= \int_{\R^3}|u|^{p-2}uv\,dx\quad\text{for all $v\in W$}
\end{equation}
and
\begin{equation}\label{eq:varphi}
\int_{\mathbb R^3}\langle\nabla\varphi,\nabla\psi\rangle \,dx = \int_{\R^3}(\omega-\varphi)\psi u^2\,dx\quad\text{for all $\psi\in\mathcal D^{1,2}(\R^3)$}.
\end{equation}
\end{definition}

To show that Definition~\ref{eq:DefWeakSol} makes sense we state and prove the following result.
\begin{lemma}\label{spacesolutions}
The system is coherent, whether $u,v\in W$ and $\varphi,\psi\in\mathcal D^{1,2}(\R^3)$.
\end{lemma}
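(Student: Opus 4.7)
The plan is to verify that each of the six integrals appearing in \eqref{eq:u} and \eqref{eq:varphi} is absolutely convergent whenever $u,v\in W$ and $\varphi,\psi\in\mathcal D^{1,2}(\R^3)$. The main ingredients will be: the continuity of the bilinear form $\mathcal B_\alpha$ on $H^1(\R^3)\times H^1(\R^3)$ (already observed after Definition~\ref{def:Balpha}); the continuous embedding $W\hookrightarrow L^q(\R^3)$ for every $q\in[2,6]$ (which follows from \eqref{eq:Sobolev} together with interpolation); the continuous embedding $\mathcal D^{1,2}(\R^3)\hookrightarrow L^6(\R^3)$; and the built--in weighted integrability $\int_{\R^3}(V-V_0)u^2\,dx<\infty$ encoded in the norm of $W$.

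For the first equation, $\mathcal B_\alpha(u,v)$ is finite since $W\subset H^1(\R^3)$. The nonlinear term $\int_{\R^3}|u|^{p-2}uv\,dx$ is controlled by H\"older's inequality with exponents $p/(p-1)$ and $p$, using $u,v\in L^p(\R^3)$ since $p\in(2,6)$. For the potential term I will split
\[
\int_{\R^3}Vuv\,dx=V_0\int_{\R^3}uv\,dx+\int_{\R^3}(V-V_0)uv\,dx,
\]
bound the first summand by Cauchy--Schwarz in $L^2(\R^3)$, and the second by Cauchy--Schwarz with respect to the positive measure $(V-V_0)\,dx$, obtaining a bound by $\big(\int(V-V_0)u^2\big)^{1/2}\big(\int(V-V_0)v^2\big)^{1/2}$, which is finite by definition of $W$. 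For the coupling term I will expand $(\omega-\varphi)^2=\omega^2-2\omega\varphi+\varphi^2$: the $\omega^2$ piece reduces to the $L^2$ pairing of $u$ and $v$; the linear $\varphi$ piece is estimated by H\"older with exponents $6,\,12/5,\,12/5$ using $\varphi\in L^6$ and $u,v\in L^{12/5}\subset W$; the quadratic $\varphi^2$ piece is handled by H\"older with exponents $3,3,3$, since $\varphi^2\in L^3$ (because $\varphi\in L^6$) and $u,v\in L^3\subset W$.

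For equation \eqref{eq:varphi}, $\int_{\R^3}\langle\nabla\varphi,\nabla\psi\rangle\,dx$ is the inner product in $\mathcal D^{1,2}(\R^3)$, hence finite. The right-hand side $\int_{\R^3}(\omega-\varphi)\psi u^2\,dx$ is split as $\omega\int_{\R^3}\psi u^2\,dx-\int_{\R^3}\varphi\psi u^2\,dx$; the first is bounded by $\omega\|\psi\|_6\|u\|_{12/5}^2$ via H\"older with exponents $6,\,12/5,\,12/5$, while the second satisfies $|\int_{\R^3}\varphi\psi u^2\,dx|\le\|\varphi\|_6\|\psi\|_6\|u\|_3^2$ by H\"older with exponents $6,6,3,3$. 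All factors are finite by the embeddings above.

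The only mildly delicate point is the potential term, where one cannot simply estimate $|Vuv|$ in $L^1$ (since $V$ may be unbounded), and the correct idea is to view $(V-V_0)\,dx$ as a positive Radon measure on which $u,v$ are square integrable by the very definition of $W$; everything else is a bookkeeping exercise of H\"older exponents in view of the critical embedding of $\mathcal D^{1,2}(\R^3)$ into $L^6(\R^3)$.
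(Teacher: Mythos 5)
Your proposal is correct and follows essentially the same route as the paper: term-by-term absolute convergence via H\"older's inequality with the same exponent choices ($6,\tfrac{12}{5},\tfrac{12}{5}$ and $3,3,3$ after expanding $(\omega-\varphi)^2$), the splitting $V=V_0+(V-V_0)$ with Cauchy--Schwarz against the weight $(V-V_0)\,dx$, and the continuity of $\mathcal B_\alpha$ on $H^1(\R^3)\times H^1(\R^3)$. No substantive difference from the paper's argument.
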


\begin{proof}
Let us show that all the terms in~\eqref{eq:u} and~\eqref{eq:varphi} are well defined for $u,v\in W$ and $\varphi,\psi\in \mathcal D^{1,2}(\R^3)$. As observed before, the bilinear form $\mathcal B_\alpha$ is well defined and continuous on $W\times W\subset H^1(\R^3)\times H^1(\R^3)$. Moreover, by Hölder inequality,
\begin{align*}\left|\int_{\R^3}V uv \,dx\right|&\le \left|\int_{\R^3}(V-V_0)uv \,dx\right|+|V_0|\left|\int_{\R^3}uv \,dx\right|\\
&\le \|u\|_W\|v\|_W+|V_0|\|u\|_2\|v\|_2<\infty
\end{align*}
for every $u,v\in W$.
By the same arguments used in~\cite{BF}, we also have
\begin{align*}
\left|\int_{\R^3}(\omega-\varphi)^2 uv \,dx\right|&\le \omega^2\|u\|_2\|v\|_2+2\omega\|\varphi\|_6\|u\|_{\frac{12}{5}}\|v\|_{\frac{12}{5}}+\|\varphi\|_6^2\|u\|_3\|v\|_3<\infty,\\
\left|\int_{\R^3}|u|^{p-2}uv\,dx\right|&\le \|u\|_p^{p-1}\|v\|_p<\infty
\end{align*}
for every $u,v\in W$ and $\varphi\in\mathcal D^{1,2}(\R^3)$. On the other hand
\begin{align*}
\left|\int_{\mathbb R^3}\langle\nabla\varphi,\nabla\psi\rangle \,dx\right|&\le \|\varphi\|_{\mathcal D^{1,2}}\|\psi\|_{\mathcal D^{1,2}}<\infty,\\
\left|\int_{\R^3}(\omega-\varphi)\psi u^2\,dx\right|&\le\omega\|\psi\|_6\|u\|_{\frac{12}{5}}^2+\|\varphi\|_6\|\psi\|_6\|u\|_3^2<\infty
\end{align*}
for every $u\in W$ and $\varphi,\psi\in\mathcal D^{1,2}(\R^3)$.
\end{proof}

It is easy to see that regular solutions of~\eqref{eq:problem} are actually weak solutions, according to Definition~\ref{eq:DefWeakSol}.
As usual, weak solutions of~\eqref{eq:problem} can be found as critical points of the functional $F\colon W\times\mathcal D^{1,2}(\R^3) \to \R$, defined as
\begin{align*}
F(u,\varphi):= & \dfrac{1}{2}\mathcal B_\alpha(u,u)+\dfrac{1}{2}\int_{\R^3}Vu^2\,dx-\dfrac{1}{2}\int_{\R^3}(\omega-\varphi)^2u^2\,dx\\
&-\frac{1}{p}\int_{\R^3}|u|^p\,dx-\dfrac{1}{2}\int_{\R^3}|\nabla\varphi|^2 \,dx.
\end{align*}
As in~\cite{BF}, the functional $F$ is Fr\'{e}chet differentiable on $W\times\mathcal D^{1,2}(\R^3)$ and for all $u,v\in W$ and $\varphi,\psi\in\mathcal D^{1,2}(\R^3)$ we have
\begin{align*}
F'_u(u,\varphi)[v] &=\mathcal B_\alpha(u,v)+\int_{\R^3}Vuv \,dx-\int_{\R^3}(\omega-\varphi)^2uv \,dx-\int_{\R^3}|u|^{p-2}uv \,dx,\\
F'_\varphi(u,\varphi)[\psi] &=\int_{\R^3}(\omega-\varphi)u^2\psi \,dx-\int_{\R^3}\langle\nabla\varphi,\nabla\psi\rangle \,dx.
\end{align*}
Unfortunately, even though it seems to be natural to work with the functional $F$, we are unable to endow the Hilbert space $W\times\mathcal D^{1,2}(\R^3)$ with a norm suitable to apply the theory of critical points to $F$. Therefore, we look for another variational characterization of problem~\eqref{eq:problem}.

First, we fix $u\in W$ and we look for a solution $\varphi(u)$ of~\eqref{eq:varphi}.
Since $\varphi$ is a solution of~\eqref{eq:varphi} if and only if $-\varphi$ is a solution of~(2.2) in~\cite{Dap1}, we can restate~\cite[Proposition~2.2]{Dap1} is the following form (more suitable in the present context).
\begin{lemma}\label{lem:varphi}
For every $u\in W$ there exists a unique $\varphi(u)\in\mathcal D^{1,2}(\R^3)$ which solves~\eqref{eq:varphi}. Moreover,
$$\text{$\varphi(u)\ge 0$ in $\R^3$ and $\varphi(u)\le \omega$ on the set $\{x\in\R^3\,:\, u(x)\neq 0\}$}.$$
Finally, if $u$ is radially symmetric, then also $\varphi(u)$ is radially symmetric.
\end{lemma}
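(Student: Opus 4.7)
The plan is to mirror the classical Lax--Milgram argument of D'Aprile--Mugnai~\cite[Proposition~2.2]{Dap1}, noting that equation~\eqref{eq:varphi} involves only the classical Laplacian, so the mixed operator $\LL$ (and in particular the sign of $\alpha$) plays no role whatsoever in this step. For fixed $u\in W$, I would consider on $\mathcal D^{1,2}(\R^3)$ the bilinear form
\[
a_u(\varphi,\psi):=\int_{\R^3}\langle\nabla\varphi,\nabla\psi\rangle\,dx+\int_{\R^3}u^2\varphi\psi\,dx
\]
and the linear functional $L_u(\psi):=\omega\int_{\R^3}u^2\psi\,dx$. Both are continuous by H\"older combined with the embeddings $W\hookrightarrow L^{12/5}(\R^3)$ and $\mathcal D^{1,2}(\R^3)\hookrightarrow L^6(\R^3)$, and $a_u$ is coercive since $a_u(\varphi,\varphi)\ge \|\varphi\|_{\mathcal D^{1,2}}^2$. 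Lax--Milgram then provides a unique $\varphi(u)\in \mathcal D^{1,2}(\R^3)$ solving~\eqref{eq:varphi}.

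For the pointwise bounds I would use standard truncation test functions. Testing~\eqref{eq:varphi} against $\psi:=\varphi(u)^-=\max(-\varphi(u),0)\in \mathcal D^{1,2}(\R^3)$ gives
\[
-\|\nabla\varphi(u)^-\|_2^2=\int_{\{\varphi(u)<0\}}(\omega+|\varphi(u)|)|\varphi(u)|u^2\,dx\ge 0,
\]
forcing $\varphi(u)^-\equiv 0$, i.e., $\varphi(u)\ge 0$. For the upper bound, the candidate is $\psi:=(\varphi(u)-\omega)^+$: since $\varphi(u)\in L^6(\R^3)$, Chebyshev's inequality yields $|\{\varphi(u)>\omega\}|<\infty$, hence $0\le\psi\le \varphi(u)\in L^6(\R^3)$, and $\nabla\psi=\nabla\varphi(u)\,\chi_{\{\varphi(u)>\omega\}}\in L^2(\R^3;\R^3)$; by the standard characterization $\mathcal D^{1,2}(\R^3)=\{f\in L^6(\R^3):\nabla f\in L^2(\R^3;\R^3)\}$, this yields $\psi\in \mathcal D^{1,2}(\R^3)$. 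Testing then produces
\[
\|\nabla(\varphi(u)-\omega)^+\|_2^2+\int_{\{\varphi(u)>\omega\}}(\varphi(u)-\omega)^2 u^2\,dx=0,
\]
and the vanishing of the second integrand delivers $\varphi(u)\le \omega$ on $\{u\ne 0\}$.

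Radial symmetry then follows from the uniqueness just established: if $u$ is radial and $R\in SO(3)$, then $\tilde\varphi:=\varphi(u)\circ R\in \mathcal D^{1,2}(\R^3)$, and a change of variables together with $u^2\circ R=u^2$ and the rotational invariance of the gradient shows that $\tilde\varphi$ also solves~\eqref{eq:varphi}, whence $\tilde\varphi=\varphi(u)$. I expect the only mildly subtle point of the whole argument to be the admissibility of $(\varphi(u)-\omega)^+$ as a test function, which on a bounded domain is automatic but here requires the finite-measure observation above, since the constant $\omega$ does not belong to $\mathcal D^{1,2}(\R^3)$; the remainder is a routine Lax--Milgram plus truncation exercise.
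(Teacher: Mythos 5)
Your proof is correct and is precisely the standard Lax--Milgram plus truncation argument that underlies \cite[Proposition~2.2]{Dap1}, which the paper does not reprove but simply restates (after the sign change $\varphi\mapsto-\varphi$). The one point you flag as subtle --- the admissibility of $(\varphi(u)-\omega)^+$ as a test function via the finite measure of $\{\varphi(u)>\omega\}$ and the characterization $\mathcal D^{1,2}(\R^3)=\{f\in L^6(\R^3):\nabla f\in L^2(\R^3;\R^3)\}$ --- is handled correctly, and the rest (coercivity, the sign argument using $\omega>0$, and radial symmetry by uniqueness under the $O(3)$ action) matches the classical treatment.
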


% \begin{proof}
% We fix $u\in W$ and we define a bilinear form $a_u\colon\mathcal D^{1,2}(\R^3)\times\mathcal D^{1,2}(\R^3)\to\R$ as
% $$a_u(\varphi,\psi):=\int_{\mathbb R^3}\langle\nabla\varphi,\nabla\psi\rangle \,dx + \int_{\R^3}\varphi\psi u^2\,dx\quad\text{for all $\varphi,\psi\in\mathcal D^{1,2}(\R^3)$}.$$
% Clearly, $a_u$ satisfies the assumptions of Lax-Milgram Lemma and so there exists a unique function $\varphi(u)\in\mathcal D^{1,2}(\R^3)$ solution to
% $$a_u(\varphi(u),\psi)=\omega\int_{\R^3}\psi u^2\,dx,$$
% i.e., $\varphi(u)$ is the unique solution to~\eqref{eq:varphi} for a fixed $u\in W$. In particular, $\varphi(u)$ is characterized as the unique minimum point of the functional
% $$\mathcal L(\varphi):=\frac{1}{2}\int_{\R^3}(|\nabla\varphi|^2+\varphi^2u^2)\,dx-\omega\int_{\R^3}\varphi u^2\,dx\quad\text{for $\varphi\in\mathcal D^{1,2}(\R^3)$}.$$
% Since
% $\mathcal L(\varphi^+)\le\mathcal L(\varphi)$ for all $\varphi\in\mathcal D^{1,2}(\R^3)$,
% we get $\varphi(u)=\varphi(u)^+\ge 0$ in $\R^3$.

% Let us consider $\psi:=\max\{\varphi(u)-\omega,0\}$. Since $\varphi(u)\in\mathcal D^{1,2}(\R^3)$ it follows that $\psi\in\mathcal D^{1,2}(\R^3)$. By using $\psi$ as test function in~\eqref{eq:varphi} we get
% \begin{equation*}
% 0=\int_{\{\varphi(u)>\omega\}}|\nabla\varphi(u)|^2\,dx +\int_{\{\varphi(u)>\omega\}}(\omega-\varphi(u))^2u^2\,dx
% \end{equation*}
% which implies $\varphi(u)\le \omega$ on the set $\{x\in\R^3\,:\, u(x)\neq 0\}$.
% \end{proof}

\begin{remark}
In the general case in which $\omega\in\R\setminus\{0\}$ and $e\in\{\pm 1\}$ we deduce that for every $u\in W$ the unique solution $\varphi(u)\in\mathcal D^{1,2}(\R^3)$ of~\eqref{eq:varphi} satisfies
$$-\frac{e}{\omega}\varphi(u)\ge 0\text{ in $\R^3$ and }-\frac{e}{\omega}\varphi(u)\le 1\text{ on the set $\{x\in\R^3\,:\, u(x)\neq 0\}$}.$$
\end{remark}

Fixed $u\in W$, let $\varphi_u:=\varphi(u)\in\mathcal D^{1,2}(\R^3)$ be the unique solution of~\eqref{eq:varphi}. Then, $F_\varphi'(u,\varphi_u)[\psi]=0$ for every $\psi\in\mathcal D^{1,2}(\R^3)$ and for $\psi=\varphi_u$ we get
\begin{equation}\label{eq:idvarphiu}
\int_{\mathbb R^3}|\nabla\varphi_u|^2\,dx=\int_{\R^3}(\omega-\varphi_u)\varphi_u u^2\,dx.
\end{equation}
This allows us to introduce the following functional, as done in~\cite{BF}.
\begin{definition}
Fix any function $u\in W$, let $\varphi_u\in\mathcal D^{1,2}(\R^3)$ be the unique solution of~\eqref{eq:varphi}.
We define the functional $J\colon W\to\R$ by
\begin{equation}\label{eq:J}
J(u):=\dfrac{1}{2}\mathcal B_\alpha(u,u) +\dfrac{1}{2}\int_{\R^3}(V -\omega^2)u^2\,dx+\dfrac{\omega}{2}\int_{\R^3}\varphi_u u^2\,dx-\frac{1}{p}\int_{\R^3}|u|^p \,dx.
\end{equation}
\end{definition}
By the identity~\eqref{eq:idvarphiu}, we have
\[
J(u)=F(u,\varphi_u)\,.
\]
Moreover, by standard arguments, the map $u\mapsto\varphi_u$ from $W$ into $\mathcal D^{1,2}(\R^3)$ is of class $C^1$ (for a detailed proof, we refer to~\cite[Proposition 2.1]{Dap2}). Hence, the functional $J$ is Fr\'{e}chet differentiable on $W$ and
\begin{equation*}
J'(u)[v] =F'_u(u,\varphi_u)[v]\quad\text{for all $u,v\in W$},
\end{equation*}
since $F_\varphi'(u,\varphi_u)[\varphi_u'[v]]=0$, that is
\begin{align*}
J'(u)[v]&=\mathcal B_\alpha(u,v)+\int_{\R^3}(V-\omega^2)uv \,dx+2\omega\int_{\R^3}\varphi_u uv \,dx\\
&\quad-\int_{\R^3}\varphi_u^2uv \,dx-\int_{\R^3}|u|^{p-2}uv \,dx\quad\text{for any }u,v\in W.
\end{align*}
Therefore, as in~\cite{BBF}, a pair $(u,\varphi)\in W\times\mathcal D^{1,2}(\R^3)$ is a weak solution of problem~\eqref{eq:problem} if and only if $\varphi=\varphi_u$ and $u$ is a critical points of $J$.

Hence, in order to find solutions of problem~\eqref{eq:problem} it is enough to find critical points of $J$ on $W$. This is done be applying an equivariant version of the Mountain Pass Theorem, in the form given by~\cite[Theorem 9.12]{Rabinowitz86} (see also~\cite[Theorem 2.13]{AR} and~\cite[Theorem 2.4]{BBF}). First, we recall the following definition.

\begin{definition}
Let $f$ be a $C^1$ function, defined on an infinite dimensional Banach space $X$. We say that the functional $f$ satisfies the
Palais–Smale condition $(PS)$ if any sequence $(u_n)_n\subset X$ such that $(f(u_n))_n\subset\R$ is bounded and $f'(u_n)\to 0$ in $X'$ as $k\to \infty$ has a convergent subsequence.
\end{definition}

\begin{theorem}[{\cite[Theorem 9.12]{Rabinowitz86}}]\label{Rabinowitz86}
Let $f$ be a even $C^1$ function, defined on an infinite dimensional Banach space $X$ and such that $f(0) = 0$. Assume that $X$ is decomposable as direct sum of two closed subspaces $X = X_1 \oplus X_2$, with $\dim X_1 < \infty$.
Suppose that:
\begin{itemize}
\item [$(i)$] there exist $\delta,\varrho>0$ such that
\begin{equation}
\inf f(S_\varrho\cap X_2)\geq\delta,
\end{equation}
where $S_{\varrho}:=\{u\in X\,:\,\|u\|_X=\varrho\}$;
    \item[$(ii)$] for any finite dimensional subspace $Y\subset X$ there exists $R=R(Y)>0$ such that for any $u\in Y$ with $\|u\|\geq R$
    \begin{equation}
    f(u)\leq 0\,;
    \end{equation}
    \item[$(iii)$] $f$ satisfies the $(PS)$ condition.
  \end{itemize}
  Then, $f$ has an unbounded sequence of positive critical values.
\end{theorem}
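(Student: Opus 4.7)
The plan is to prove the theorem by an equivariant minimax procedure built on Krasnoselskii's genus, combined with a $\mathbb{Z}_2$-invariant deformation lemma. The evenness of $f$ allows one to construct an odd pseudo-gradient vector field, whose associated flow deforms sublevel sets without breaking the symmetry and preserves the minimax class defined below.

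First I would introduce the Krasnoselskii genus $\gamma(A)$ of a closed symmetric set $A \subset X \setminus \{0\}$, with the convention $\gamma(\emptyset)=0$, and recall its standard properties: monotonicity, subadditivity, invariance under odd homeomorphisms, continuity on a small $\varepsilon$-neighborhood, and the key intersection inequality $\gamma(\overline{A\setminus Y})\ge \gamma(A)-\gamma(Y)$. Next, for each $n\in\mathbb{N}$, I would fix a subspace $E_n=X_1\oplus F_n$ with $F_n\subset X_2$ of dimension $n$, and use hypothesis $(ii)$ to obtain $R_n>0$ with $f\le 0$ on $E_n\setminus B_{R_n}$. Setting $D_n:=\overline{B_{R_n}}\cap E_n$ and letting $\mathcal H$ denote the family of odd continuous maps $h\colon X\to X$ that coincide with the identity on $\{f\le 0\}$, I would define
\begin{equation*}
\Gamma_n:=\{\,h(D_n\setminus Y)\,:\, h\in\mathcal H,\ Y\subset X\setminus\{0\}\text{ closed symmetric},\ \gamma(Y)\le n-1-\dim X_1\,\}
\end{equation*}
and the minimax level $c_n:=\inf_{A\in\Gamma_n}\sup_{u\in A}f(u)$.

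I would then verify four points. $(a)$ $c_n<\infty$, since $D_n\in\Gamma_n$ (choose $h=\mathrm{id}$, $Y=\emptyset$) and $f$ is bounded on the compact set $D_n$. $(b)$ $c_n\ge\delta>0$: the genus count forces every $A\in\Gamma_n$ to intersect $S_\varrho\cap X_2$, so $(i)$ applies; the shift $n-1-\dim X_1$ in the definition of $\Gamma_n$ is calibrated exactly so that $\overline{D_n\setminus Y}$ still has genus at least $n-\dim X_1$, which after an odd map $h$ must cross the sphere $S_\varrho\cap X_2$. $(c)$ each $c_n$ is a critical value: absent a critical point at level $c_n$, the equivariant deformation lemma produced by an odd pseudo-gradient flow would push down $\sup f$ on some $A\in\Gamma_n$ by a definite amount while keeping $A$ in $\Gamma_n$, contradicting the infimum. $(d)$ $c_n\to\infty$: this uses $(PS)$ together with the observation that if $(c_{n_k})_k$ stayed bounded, some critical level $c$ would receive contributions from infinitely many $n_k$, forcing the critical set $K_c$ to have infinite genus, which is impossible since $K_c$ is compact by $(PS)$ and compact symmetric subsets of $X\setminus\{0\}$ have finite genus.

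The main obstacle is step $(b)$, where the genus has to be tracked carefully through the finite-dimensional shift by $\dim X_1$ and through the action of maps in $\mathcal H$ that are not assumed to be homeomorphisms away from $\{f\le 0\}$. Equally delicate is step $(d)$: the \emph{unbounded} divergence of the critical values relies on the infinite-dimensionality of $X$ via the freedom to enlarge $F_n\subset X_2$, combined with the fact that under $(PS)$ the set of critical values of $f$ is locally finite whenever each critical level has finite genus. Both of these steps are by now classical, and I would ultimately quote them from the equivariant minimax framework of \cite{AR,BBF,Rabinowitz86}, to which the statement is attributed.
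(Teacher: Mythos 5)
The paper offers no proof of this statement: it is quoted verbatim as Theorem 9.12 of Rabinowitz's CBMS lectures \cite{Rabinowitz86} and used as a black box, so there is no internal argument to compare against. Your outline is precisely the genus-based symmetric minimax proof from that source, and as an outline it is sound: the classes $\Gamma_n$, the Borsuk--Ulam intersection lemma giving $c_n\ge\delta$, the odd pseudo-gradient deformation, and the multiplicity estimate $\gamma(K_c)\ge p+1$ when $p+1$ consecutive levels coincide are exactly the ingredients of \cite[Chapter 9]{Rabinowitz86}. One detail of your construction would need repair in a full write-up: you build $\Gamma_n$ from the single set $D_n$, whereas Rabinowitz's class $\Gamma_j$ consists of all $h(\overline{D_m\setminus Y})$ with $m\ge j$ and $\gamma(Y)\le m-j$. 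That extra freedom in $m$ is what makes the classes nested ($\Gamma_{n+1}\subset\Gamma_n$), hence the levels $c_n$ monotone, and what yields the excision property ($A\in\Gamma_{n+p}$ and $\gamma(Y)\le p$ imply $\overline{A\setminus Y}\in\Gamma_n$) on which your step $(d)$ silently relies; with a fixed base set $D_n$ per class neither property is available, since $D_{n+1}\notin\Gamma_n$. Your genus budget $n-1-\dim X_1$ is also more restrictive than necessary --- the crossing of $S_\varrho\cap X_2$ only requires the residual genus to exceed $\dim X_1$ --- but that is harmless. With the class definition adjusted to Rabinowitz's, your sketch is the standard proof of the cited theorem.
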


Notice that for the functional $J\colon W\to\R$ defined in~\eqref{eq:J} we have
\begin{itemize}
  \item $J\in C^1(W)$;
  \item $J(0)=0$;
  \item $J$ is even.
\end{itemize}
In the next two sections we prove that $J$, or a suitable restriction of it, satisfies the assumptions of Theorem~\ref{Rabinowitz86} in both the cases $(\text{I})$ and $(\text{II})$.
The appropriate choice of the functional will differ in the two cases.

%----------------------------------
% KGM equation with no potential
%----------------------------------

\section{Case (\text{I}): the generalized KGM equation}
\label{Sect3}
In this section we prove our existence result when
\begin{equation}
V(x)=m^2\quad\text{for all $x\in\R^3$},
\end{equation}
with $m>0$. In this case $W=H^1(\R^3)$ and
$$\|u\|_W=\|u\|_{H^1}\quad\text{for all $u\in W=H^1(\R^3)$}.$$

To find critical points of $J$ we shall restrict it to the subspace of radial functions
\[
H_r^1(\R^3):=\{u\in H^1(\R^3)\,:\,u(|x|)=u(x)\text{ for any }x\in\R^3\}.
\]
This (standard) procedure is allowed by the following result.
\begin{lemma}\label{Lemmaradial}
Under the assumptions of Theorem~\ref{mainthm1}, $u\in H^1_r(\mathbb{R}^3)$ is a critical point of $J|_{H^1_r(\mathbb{R}^3)}$ if and only if $u$ is a critical point of $J$.
\end{lemma}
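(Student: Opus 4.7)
The plan is to invoke the classical Palais principle of symmetric criticality. Consider the natural action of the orthogonal group $O(3)$ on $H^1(\R^3)$ via $(g\cdot u)(x):=u(g^{-1}x)$; this action is linear, continuous, isometric with respect to $\|\cdot\|_{H^1}$, and its fixed-point set is exactly $H_r^1(\R^3)$.

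The implication ``$u$ is a critical point of $J$ $\Longrightarrow$ $u$ is a critical point of $J|_{H_r^1(\R^3)}$'' is immediate from the inclusion $H_r^1(\R^3)\subset H^1(\R^3)$. For the converse, the key preliminary step is to verify that $J$ is $O(3)$-invariant. For the local gradient term $\|\nabla u\|_2^2$ and the $L^p$-norms this is just a change of variables, since rotations have Jacobian $\pm 1$. For the nonlocal contribution, invariance of $\|(-\Delta)^{s/2}u\|_2^2=\tfrac{1}{2}C(s)[u]_{s,2}^2$ follows by performing the substitution $(x,y)\mapsto(g^{-1}x,g^{-1}y)$ in the double integral defining the Gagliardo seminorm, using that $|gx-gy|=|x-y|$. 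The slightly delicate point is the term $\int_{\R^3}\varphi_u u^2\,dx$, for which one needs the equivariance $\varphi_{g\cdot u}=g\cdot\varphi_u$. This is obtained by noticing that $g\cdot\varphi_u\in\mathcal D^{1,2}(\R^3)$ and that it satisfies, in the weak sense, the equation
\[
-\Delta(g\cdot\varphi_u)=(\omega-g\cdot\varphi_u)(g\cdot u)^2,
\]
so the uniqueness statement of Lemma~\ref{lem:varphi} forces $\varphi_{g\cdot u}=g\cdot\varphi_u$; consequently $\int_{\R^3}\varphi_{g\cdot u}(g\cdot u)^2\,dx=\int_{\R^3}\varphi_u u^2\,dx$ by one further change of variables.

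Once the $O(3)$-invariance of $J$ is established, assume $u\in H_r^1(\R^3)$ satisfies $(J|_{H_r^1(\R^3)})'(u)=0$ and pick an arbitrary $v\in H^1(\R^3)$. Differentiating the identity $J(g\cdot w)=J(w)$ at $w=u$ in the direction $v$ and using that $g\cdot u=u$ by radiality, one obtains $J'(u)[g\cdot v]=J'(u)[v]$ for every $g\in O(3)$. Averaging with respect to the normalized Haar measure $d\mu$ on the compact group $O(3)$ gives
\[
J'(u)[v]=J'(u)\!\left[\int_{O(3)}g\cdot v\,d\mu(g)\right]=J'(u)[\bar v],
\]
where the averaged field $\bar v\in H_r^1(\R^3)$ is by construction a radial function. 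Since $u$ is critical for $J$ restricted to the radial subspace, $J'(u)[\bar v]=0$, and hence $J'(u)=0$ on the whole of $H^1(\R^3)$.

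The main obstacle I expect is verifying the $O(3)$-equivariance of the ``Maxwell component'' $u\mapsto\varphi_u$, which crucially relies on the uniqueness half of Lemma~\ref{lem:varphi}; the remaining invariances amount to routine change-of-variable bookkeeping in integrals against rotation-invariant measures and kernels, and the final averaging step is a standard compact-group argument.
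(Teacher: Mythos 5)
Your proposal is correct and follows essentially the same route as the paper: both reduce the statement to the $O(3)$-invariance of $J$ and then conclude by the principle of symmetric criticality, which the paper imports wholesale from \cite[Lemma 4.2]{BF} and you re-derive explicitly via Haar-measure averaging. The only differences are cosmetic: you verify the invariance of the nonlocal term through a change of variables in the Gagliardo double integral, whereas the paper uses the Fourier identity $\|(-\Delta)^{s/2}u\|_2^2=\int_{\R^3}|\xi|^{2s}|\mathcal Fu(\xi)|^2\,d\xi$ together with $\mathcal F(T_gu)=T_g(\mathcal Fu)$, and your explicit check of the equivariance $\varphi_{g\cdot u}=g\cdot\varphi_u$ via the uniqueness part of Lemma~\ref{lem:varphi} spells out a detail the paper delegates to the cited reference.
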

\begin{proof}
The arguments of~\cite[Lemma 4.2]{BF} continue to apply to the functional $J$ defined in~\eqref{eq:J}, which is given by
\begin{equation}
J(u):=\dfrac{1}{2}\mathcal B_\alpha(u,u) +\dfrac{m^2 -\omega^2}{2}\int_{\R^3}u^2\,dx+\dfrac{\omega}{2}\int_{\R^3}\varphi_u u^2\,dx-\frac{1}{p}\int_{\R^3}|u|^p \,dx.
\end{equation}
Considering that the function $\varphi_u$ in the paper~\cite{BF} corresponds to $-\varphi_u$ in the present one, $J$ can be written as
\begin{equation}
J(u):=\dfrac{\alpha}{2}\|(-\Delta)^\frac{s}{2}u\|_2^2+J_0(u),
\end{equation}
where $J_0$ is the functional in~\cite{BF}.

The main argument there consists in the invariance of $J_0$ under to $O(3)$ group action $T_g$ on $H^1(\R^3)$ given by
\begin{equation}
    T_g u(x)=u(g(x)),\quad g\in O(3),
\end{equation}
explicitly written as $g(x)=Ox$, where $O$ is an orthogonal matrix.

Now also $u\to\|(-\Delta)^\frac{s}{2}u\|_2^2$ is invariant under the same action.
This fact can be established by using the Spectral Theorem or, in a more elementary way, by~\cite[Proposition 3.6]{DRV}, that is the formula
\begin{equation}
\|(-\Delta)^{\frac{s}{2}}u\|_2^2=\int_{\R^3}|\xi|^{2s}|\mathcal Fu(\xi)|^2\,d\xi.
\end{equation}
Indeed, for any $u\in\mathcal S(\R^3)$ one has
\begin{equation}
\mathcal F(T_g u)=T_g(\mathcal Fu),
\end{equation}
and then
\begin{equation}
\begin{split}
    \|(-\Delta)^{\frac{s}{2}}T_g u\|_2^2&=\int_{\R^3}|\xi|^{2s}|\mathcal F (T_g u)(\xi)|^2\,d\xi=\int_{\R^3}|\xi|^{2s}|T_g(\mathcal F u(\xi))|^2\,d\xi\\
    &=\int_{\R^3}|\xi|^{2s}|\mathcal F u(O\xi)|^2\,d\xi=\int_{\R^3}|\xi|^{2s}|\mathcal Fu(\xi)|^2\,d\xi=\|(-\Delta)^{\frac{s}{2}}u\|_2^2\,,
\end{split}
\end{equation}
since $O$ is orthogonal.
\end{proof}
We shall then use Theorem~\ref{Rabinowitz86}, with $X=H^1_r(\R^3)$, $X_1=\{0\}$, and $X_2=X$.
\begin{lemma}\label{condgeome1}
Under the assumptions of Theorem~\ref{mainthm1} the functional $J$ satisfies $(i)$ and $(ii)$ of Theorem~\ref{Rabinowitz86} in $X=H^1(\R^3)$, with $X_1=\{0\}$ and $X_2=H^1(\R^3)$, and consequently also in $X=H^1_r(\R^3)$, with $X_1=\{0\}$ and $X_2=H^1_r(\R^3)$.
\end{lemma}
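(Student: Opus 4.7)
My strategy is to verify both geometric conditions on the full space $X=H^1(\R^3)$; since the infimum in $(i)$ taken over a subsphere of $H^1_r(\R^3)$ is no smaller than the same infimum over the whole sphere in $H^1(\R^3)$, and any finite-dimensional subspace of $H^1_r(\R^3)$ is also finite-dimensional in $H^1(\R^3)$, the statement on the radial space will follow automatically. The two workhorses are the pointwise bounds $0\le\varphi_u\le\omega$ on $\{u\neq 0\}$ supplied by Lemma~\ref{lem:varphi}, the Sobolev embedding $H^1(\R^3)\hookrightarrow L^p(\R^3)$ (so $\|u\|_p\le C_p\|u\|_{H^1}$), and the ``Young-with-$\varepsilon$'' estimate~\eqref{young}.

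For condition $(i)$, nonnegativity of $\frac{\omega}{2}\int_{\R^3}\varphi_u u^2\,dx$ allows me to drop this term and obtain
\[
J(u)\ge \tfrac12 \mathcal B_\alpha(u,u)+\tfrac{m^2-\omega^2}{2}\|u\|_2^2-\tfrac{C_p^p}{p}\|u\|_{H^1}^p,
\]
and both cases $(a)$ and $(b)$ force $m^2>\omega^2$ (in case $(b)$ because $m^2>2\omega^2/(p-2)>\omega^2$ for $p<4$). When $\alpha\ge 0$ the bound $\mathcal B_\alpha(u,u)\ge\|\nabla u\|_2^2$ yields a lower bound of the form $c\|u\|_{H^1}^2-C\|u\|_{H^1}^p$, positive on a small sphere since $p>2$. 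When $\alpha<0$ I would plug~\eqref{young} into $\mathcal B_\alpha(u,u)$ to obtain
\[
\tfrac12\mathcal B_\alpha(u,u)+\tfrac{m^2-\omega^2}{2}\|u\|_2^2\ge \tfrac{1+\alpha s\varepsilon}{2}\|\nabla u\|_2^2+\tfrac{m^2-\omega^2+\alpha(1-s)\varepsilon^{-s/(1-s)}}{2}\|u\|_2^2,
\]
and then choose $\varepsilon>0$ to make both coefficients strictly positive. A direct one-variable optimization (equating the two marginal thresholds on $\varepsilon$) shows such $\varepsilon$ exists precisely when $\alpha>-\alpha_0(s,m^2-\omega^2)$, which is how the function $\alpha_0$ is built in the first place; since $\Omega\le m^2-\omega^2$ and $t\mapsto\alpha_0(s,t)$ is monotone increasing, the hypothesis $\alpha>-\alpha_0(s,\Omega)$ suffices. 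From that point the geometry is closed just as in the $\alpha\ge 0$ case.

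For condition $(ii)$, fix a finite-dimensional $Y\subset H^1(\R^3)$. The continuous embedding $H^1(\R^3)\hookrightarrow H^s(\R^3)$ established in Section~\ref{Sect2} gives $\mathcal B_\alpha(u,u)\le(1+|\alpha|C_s)\|u\|_{H^1}^2$, while $\varphi_u\le\omega$ yields $\int_{\R^3}\varphi_u u^2\,dx\le\omega\|u\|_2^2$. Hence
\[
J(u)\le C_1\|u\|_{H^1}^2-\tfrac{1}{p}\|u\|_p^p,
\]
and on the finite-dimensional $Y$ the norms $\|\cdot\|_{H^1}$ and $\|\cdot\|_p$ are equivalent, so this upper bound tends to $-\infty$ as $\|u\|_{H^1}\to\infty$ (again using $p>2$), producing the required $R=R(Y)$. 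I expect the only delicate point to be the $\alpha<0$ branch of $(i)$: that is where the precise form of $\alpha_0(s,\Omega)$ must be matched exactly to the Young inequality~\eqref{young}; the rest is a direct consequence of Sobolev embeddings, $0\le\varphi_u\le\omega$, and norm equivalence on finite-dimensional subspaces.
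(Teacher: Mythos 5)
Your proposal is correct and follows essentially the same route as the paper: drop the nonnegative $\varphi_u$-term via Lemma~\ref{lem:varphi}, absorb the negative part of $\alpha$ with the Young-type estimate~\eqref{young} by choosing $\varepsilon$ in the admissible window (which exists exactly when $\alpha^-<\alpha_0(s,m^2-\omega^2)$, guaranteed by $\alpha>-\alpha_0(s,\Omega)$ and monotonicity of $\alpha_0(s,\cdot)$), then close $(i)$ by Sobolev embedding on a small sphere and $(ii)$ by norm equivalence on finite-dimensional subspaces. The restriction to $H^1_r(\R^3)$ is handled identically in the paper (``trivially we can replace $H^1(\R^3)$ with $H^1_r(\R^3)$'').
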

\begin{proof}
We first claim that there exist $\delta,\varrho>0$ such that
\begin{equation}\label{stimabasso}
\inf J(S_\varrho) \ge \delta,
\end{equation}
where $S_{\varrho}:=\{u\in H^1(\R^3) \,:\, \|u\|_{H^1}=\varrho\}$.
Indeed, by~\eqref{young} and by Lemma \ref{lem:varphi}, for any $u\in H^1(\R^3)$ and $\e>0$ we have
\begin{align*}
J(u)&=\mathcal B_\alpha(u,u)+ \frac{m^2-\omega^2}{2}\|u\|^2_2+\frac{\omega}{2}\int_{\R^3}\varphi_u u^2\,dx-\frac{1}{p}\|u\|^p_p\\
&\ge \frac{1}{2}\|\nabla u\|^2_2-\frac{\alpha^{-}}{2}\left(s\e \|\nabla u\|^2_2+(1-s)\e^{-\frac{s}{1-s}}\|u\|^2_2\right)+ \frac{m^2-\omega^2}{2}\|u\|^2_2-\frac{1}{p}\|u\|^p_p\\
    &\ge \frac{1}{2}\left(1-\alpha^{-}s\e\right)\|\nabla u\|^2_2+\frac{1}{2}\left(m^2-\omega^2 - \alpha^{-} (1-s)\e^{-\frac{s}{1-s}}\right)\|u\|^2_2-\frac{1}{p}\|u\|^p_p\,,
\end{align*}
where $\alpha^-:=\max\{-\alpha,0\}$ denotes the negative part of $\alpha$.
Let us consider the following system
\begin{equation}\label{eq:system}
\begin{cases}
1-\alpha^-s\e>0,\\
m^2-\omega^2-\alpha^-(1-s)\e^{-\frac{s}{1-s}}>0.
\end{cases}
\end{equation}
The first inequality of~\eqref{eq:system} holds when $\alpha^-=0$ and, elsewhere, trivially gives
\begin{equation}
  \e<\frac{1}{\alpha^-s}.
\end{equation}
The second inequality of~\eqref{eq:system} leads us to
\begin{align*}
\alpha^-(1-s)\e^{-\frac{s}{1-s}}&<m^2-\omega^2.
\end{align*}
Since by assumption
$$m^2-\omega^2\ge m^2-\omega^2-\frac{(4-p)^+}{p-2}w^2=:\Omega>0,$$
the system~\eqref{eq:system} is satisfied whenever
$$\frac{(1-s)^{\frac{1-s}{s}}\left(\alpha^-\right)^{\frac{1-s}{s}}}{(m^2-\omega^2)^{\frac{1-s}{s}}}<\e<\frac{1}{\alpha^-s}.$$
Thanks to $\alpha>-\alpha_0(s,\Omega)$, which implies
\begin{equation*}
\alpha^{-}<\alpha_0(s,\Omega)=s^{-s}(1-s)^{s-1}\Omega^{1-s}\le s^{-s}(1-s)^{s-1}(m^2-\omega^2)^{1-s},
\end{equation*}
there exists $\e_0\in(0,\infty)$ such that
\begin{equation}\label{eq:c1c2}
  c_1:=1-\alpha^{-}s\e_0>0,\quad c_2:=m^2-\omega^2 - \alpha^{-} (1-s)\e_0^{-\frac{s}{1-s}}>0.
\end{equation}
Hence we get
\begin{align}\label{eq:below}
    \dfrac{1}{2}\mathcal B_\alpha(u,u) +\dfrac{m^2 -\omega^2}{2}\int_{\R^3}u^2\,dx\ge& \frac{1}{2}\min\{c_1,c_2\}\|u\|^2_{H^1}.
\end{align}
Therefore, by using also~\eqref{eq:Sobolev}, for any $u\in S_{\varrho}$, we have
\begin{align*}
    J(u)\ge& \frac{1}{2}\min\{c_1,c_2\}\|u\|^2_{H^1}-\frac{C_p^p}{p}\|u\|^p_{H^1}\\
    =& \frac{1}{2}\min \{c_1,c_2\} \cdot \varrho^2-\frac{C_p^p}{p} \cdot \varrho^p\\
    =&\varrho^2 \left( \frac{1}{2}\min \{c_1,c_2\}-\frac{C_p^p}{p}\cdot \varrho^{p-2}\right)>0,
\end{align*}
where the last inequality is given by eventually setting
\begin{equation*}
\varrho<\left(\frac{p\min \{c_1,c_2\}}{2C_p^p}\right)^\frac{1}{p-2}.
\end{equation*}
% Hence, by using also~\eqref{eq:Sobolev}, we get
% \begin{align*}
% J(u) \ge& \frac{1}{2}\min \{c_1,c_2\}\|u\|^2_{H^1}-\frac{C_p^p}{p}\|u\|^p_{H^1}.
% \end{align*}
% Therefore, for any $u\in S_{\varrho}$, we have
% \begin{equation}
% J(u)\ge \frac{1}{2}\min \{c_1,c_2\} \cdot \varrho^2-\frac{C_p^p}{p} \cdot \varrho^p=\varrho^2 \left( \frac{1}{2}\min \{c_1,c_2\}-\frac{C_p^p}{p}\cdot \varrho^{p-2}\right)>0,
% \end{equation}
% where the last inequality is given by eventually setting
% \begin{equation}
%   \varrho<\left(\frac{p\min \{c_1,c_2\}}{2C_p^p}\right)^\frac{1}{p-2}.
% \end{equation}
Thus, $(i)$ is satisfied.

Let us prove $(ii)$. We fix a finite dimensional space $Y\subset H^1(\R^3)$ and $u\in Y$. By~\eqref{young} there exists a positive constant $K>0$ such that
\begin{equation}\label{stimaalto}
J(u)\le K\|u\|_{H^1}^2-\frac{1}{p}\|u\|_p^p\to-\infty
\end{equation}
as $\|u\|_{H^1}\to \infty$, since all the norms are equivalent.

Trivially we can replace $H^1(\R^3)$ with $H^1_r(\R^3)$ in~\eqref{stimabasso} and~\eqref{stimaalto}, completing the proof.
\end{proof}
\begin{lemma}\label{PS1}
Under the assumptions of Theorem~\ref{mainthm1} the functional $J|_{H^1_r(\R^3)}$ satisfies $(iii)$ of Theorem~\ref{Rabinowitz86}.
\end{lemma}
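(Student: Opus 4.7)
The plan is to verify the Palais--Smale condition for $J|_{H^1_r(\R^3)}$ via the standard scheme: bound a PS sequence, extract a weakly convergent subsequence, upgrade convergence of the auxiliary potential $\varphi_u$, and finally promote weak to strong convergence using the coercivity built into Lemma~\ref{condgeome1}. For the boundedness step let $(u_n)\subset H^1_r(\R^3)$ be a PS sequence. Testing~\eqref{eq:varphi} with $\psi=\varphi_u$ gives the identity $\|\nabla\varphi_u\|_2^2=\omega\int_{\R^3}\varphi_u u^2\,dx-\int_{\R^3}\varphi_u^2 u^2\,dx$, which rewrites $J'(u)[u]$ and leads, after a direct computation, to
\[
pJ(u)-J'(u)[u]=\frac{p-2}{2}\bigl[\mathcal B_\alpha(u,u)+(m^2-\omega^2)\|u\|_2^2\bigr]+\frac{p-2}{2}\omega\!\!\int_{\R^3}\!\!\varphi_u u^2\,dx-\|\nabla\varphi_u\|_2^2.
\]
If $p\in[4,6)$ one discards $-\|\nabla\varphi_u\|_2^2$ using $\|\nabla\varphi_u\|_2^2\le\omega\int\varphi_u u^2\,dx$; if $p\in(2,4)$ one additionally invokes $\varphi_u\le\omega$ on $\{u\ne 0\}$ from Lemma~\ref{lem:varphi} to bound $\int\varphi_u u^2\,dx\le\omega\|u\|_2^2$. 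In both cases the outcome is $pJ(u)-J'(u)[u]\ge\tfrac{p-2}{2}[\mathcal B_\alpha(u,u)+\Omega\|u\|_2^2]$, with $\Omega>0$ under either $(a)$ or $(b)$ of Theorem~\ref{mainthm1}. Since $\alpha>-\alpha_0(s,\Omega)$, repeating the Young-type argument of Lemma~\ref{condgeome1} based on~\eqref{young} with $\Omega$ in place of $m^2-\omega^2$ delivers a constant $c>0$ such that $\mathcal B_\alpha(u,u)+\Omega\|u\|_2^2\ge c\|u\|_{H^1}^2$. Combining with $J(u_n)$ bounded and $J'(u_n)[u_n]=o(\|u_n\|_{H^1})$, this forces boundedness of $(u_n)$ in $H^1(\R^3)$.

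Up to a subsequence, $u_n\rightharpoonup u$ in $H^1_r(\R^3)$, and Strauss's compact embedding $H^1_r(\R^3)\hookrightarrow L^q(\R^3)$ for every $q\in(2,6)$ yields $u_n\to u$ in $L^{12/5}(\R^3)$, $L^3(\R^3)$ and $L^p(\R^3)$. The $\varphi$-equation $-\Delta\varphi_{u_n}=(\omega-\varphi_{u_n})u_n^2$ makes $(\varphi_{u_n})$ bounded in $\mathcal D^{1,2}(\R^3)$. Subtracting the equations for $\varphi_{u_n}$ and $\varphi_u$, testing with $\varphi_{u_n}-\varphi_u$ and decomposing $\varphi_{u_n}u_n^2-\varphi_u u^2 = \varphi_{u_n}(u_n^2-u^2)+(\varphi_{u_n}-\varphi_u)u^2$, the nonnegative term $\int(\varphi_{u_n}-\varphi_u)^2 u^2\,dx$ is absorbed into the left-hand side, leaving on the right-hand side only quantities controlled by $L^{12/5}$ and $L^3$ convergence of $u_n$ paired with boundedness of $\|\varphi_{u_n}\|_6$. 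Hence $\varphi_{u_n}\to\varphi_u$ strongly in $\mathcal D^{1,2}(\R^3)$, and in particular in $L^6(\R^3)$.

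Strong convergence of $u_n$ in $H^1_r(\R^3)$ follows by testing $J'(u_n)-J'(u)$ against $u_n-u$: the left-hand side vanishes because $J'(u_n)\to 0$ together with boundedness of $(u_n-u)$ handles the $u_n$ contribution, while weak convergence handles the $u$ contribution. On the right-hand side the $L^p$-nonlinear term vanishes by strong $L^p$-convergence, and the terms $\int[\varphi_{u_n}u_n-\varphi_u u](u_n-u)\,dx$ and $\int[\varphi_{u_n}^2 u_n-\varphi_u^2 u](u_n-u)\,dx$ vanish by the previous step together with the integrability bounds recorded in the proof of Lemma~\ref{spacesolutions}. What remains is $\mathcal B_\alpha(u_n-u,u_n-u)+(m^2-\omega^2)\|u_n-u\|_2^2\to 0$; since $\Omega\le m^2-\omega^2$ entails $\alpha_0(s,\Omega)\le\alpha_0(s,m^2-\omega^2)$, the coercivity estimate of Lemma~\ref{condgeome1} applies to $u_n-u$ and yields $\|u_n-u\|_{H^1}\to 0$.

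The main obstacle is the boundedness step in the range $p\in(2,4)$: the combination $pJ(u)-J'(u)[u]$ carries the negative remainder $-\|\nabla\varphi_u\|_2^2$, which is not a priori absorbed by the leading quadratic part, and controlling it through $\omega\int\varphi_u u^2\,dx$ introduces the coefficient $(4-p)/(p-2)$ that transforms $m^2-\omega^2$ into $\Omega$. This is precisely where condition $(b)$ of Theorem~\ref{mainthm1}, namely $m\sqrt{p-2}>\sqrt{2}\,\omega$, enters to guarantee $\Omega>0$, and where the sharp range $\alpha>-\alpha_0(s,\Omega)$ is needed so that the Young-type dominance of the local gradient part in~\eqref{young} still prevails over the negative nonlocal contribution.
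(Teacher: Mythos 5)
Your proof is correct and follows essentially the same route as the paper: the same combination $pJ(u_n)-J'(u_n)[u_n]$ with the case split at $p=4$, the bounds $0\le\varphi_{u_n}\le\omega$ on $\{u_n\neq 0\}$ producing the coefficient $\Omega$, the Young-type coercivity under $\alpha>-\alpha_0(s,\Omega)$, the compact radial embedding, and the final weak-to-strong upgrade via the coercive quadratic form applied to $u_n-u$. The only deviation is your intermediate proof of strong $\mathcal D^{1,2}(\R^3)$ convergence of $\varphi_{u_n}$, which is valid but superfluous, since the paper closes the argument using only the boundedness of $\|\varphi_{u_n}\|_6$ together with the strong $L^{12/5}$ and $L^3$ convergence of $u_n$.
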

\begin{proof}
Let us fix a $(PS)$ sequence $(u_n)_n\subset H^1_r(\R^3)$. Then, for all $p\in(2,6)$, we have
\begin{equation}
\begin{split}
  pJ(u_n)-J'(u_n)[u_n]&\ge\left(\frac{p}{2}-1\right)\left(\mathcal B_\alpha(u_n,u_n)+(m^2-\omega^2)\|u_n\|_2^2\right)\\
&\quad+\omega\left(\frac{p}{2}-2\right)\int_{\R^3}\varphi_{u_n}u_n^2\,dx.
\end{split}
\end{equation}
Note that the presence of the last term on the r.h.s. force us to distinguish between two possible cases: the case $2<p<4$ and the case $4\le p<6$.

\noindent {\bf Case 1}.
In the case $4\le p<6$ and $m>\omega>0$, by~\eqref{eq:below} we immediately get
\begin{equation}\label{case1lower}
  pJ(u_n)-J'(u_n)[u_n]\ge \left(\frac{p}{2}-1\right)\min\{c_1,c_2\}\|u_n\|_{H^1}^2,
\end{equation}
being $c_1>0$ and $c_2>0$ the two constants defined in~\eqref{eq:c1c2}.

\noindent {\bf Case 2}.
Assume now $2<p<4$ and $m\sqrt{p-2}> \omega \sqrt{2}(>0)$. Then, being $p/2-2<0$ and $-\varphi_{u_n}\ge-\omega$, for every $\e>0$ we get
\begin{equation*}
\begin{aligned}
  pJ(u_n)-J'(u_n)[u_n]&\ge \left(\frac{p}{2}-1\right)\mathcal B_{\alpha}(u_n,u_n)\\&\quad+\left[\left(\frac{p}{2}-1\right)m^2-\left(\frac{p}{2}-1\right)\omega^2+\left(\frac{p}{2}-2\right)\omega^2\right]\|u_n\|_2^2\\
  &= \left( \frac{p}{2}-1\right)\mathcal B_\alpha (u_n,u_n)+\left[\left(\frac{p}{2}-1\right)m^2-\omega^2\right]\|u_n\|_2^2\\
  % &\ge\left(\frac{p}{2}-1\right)\|u_n\|_2^2-\alpha^-\left(\frac{p}{2}-1\right)[u_n]_s^2+\left((\frac{p}{2}-1)m^2-\omega^2\right)\|u_n\|_2^2\\
  % &\ge \left(\frac{p}{2}-1\right)\|\nabla\ u_n\|_2^2-\alpha^-\left(\frac{p}{2}-1\right)\left[(1-s)\e^{-\frac{s}{1-s}}\|u_n\|_2^2+s\e\|\nabla u_n\|_2^2\right]+k\|u_n\|_2^2\\
  &\geq\left(\frac{p}{2}-1\right)\left(1-\alpha^-s\e\right)\|\nabla u_n\|_2^2\\
  &\quad+\left[\left(\frac{p}{2}-1\right)\left(m^2-\alpha^-(1-s)\e^{-\frac{s}{1-s}}\right)-\omega^2\right]\|u_n\|_2^2.
\end{aligned}
\end{equation*}
We consider now the following system
\begin{equation}\label{system}
  \begin{cases}
    1-\alpha^-s\e>0,\\
    \left(\frac{p}{2}-1\right)\left(m^2-\alpha^-(1-s)\e^{-\frac{s}{1-s}}\right)>\omega^2.
  \end{cases}
\end{equation}
As before, the first inequality of~\eqref{system} holds when either $\alpha^-=0$ or
\begin{equation}
  \e<\frac{1}{\alpha^-s}.
\end{equation}
The second inequality of~\eqref{system} leads us to
\begin{align*}
  \alpha^-(1-s)\e^{-\frac{s}{1-s}}&<m^2-\frac{2\omega^2}{p-2}=
  m^2-\omega^2-\left(\frac{4-p}{p-2}\right)\omega^2=\Omega.
\end{align*}
Since $\Omega>0$ the system~\eqref{system} is satisfied whenever
$$\frac{(1-s)^{\frac{1-s}{s}}\left(\alpha^-\right)^{\frac{1-s}{s}}}{\Omega^{\frac{1-s}{s}}}<\e<\frac{1}{\alpha^-s}.$$
By noticing that $\alpha^-<\alpha_0(s,\Omega)$, we can find $\e_1\in(0,\infty)$ such that
\begin{equation}
  d_1:=\left(\frac{p}{2}-1\right)\left(1-\alpha^-s\e_1\right),\quad d_2:=\left(\frac{p}{2}-1\right)\left(m^2-\alpha^-(1-s)\e_1^{-\frac{s}{1-s}}\right)-\omega^2
\end{equation}
are both positive constants. Therefore
\begin{equation}\label{boundedness}
  pJ(u_n)-J'(u_n)[u_n]\ge\min\{d_1,d_2\}\|u_n\|_{H^1}^2.
\end{equation}

Since $(J(u_n))_n$ is bounded in $\R$ and $(J'|_{H_r^1(\R^3)}(u_n))_n$ is bounded in $(H^1_r(\R^3))'$, being $(u_n)_n$ a $(PS)$ sequence, there exist two positive constants $K_1,K_2$ such that
\begin{equation}\label{PSseq}
J(u_n)\le\ K_1\quad\text{and}\quad|J'(u_n)[u_n]|\le K_2\|u_n\|_{H^1}\quad\text{for any }n\in\mathbb{N}.
\end{equation}
Hence, by~\eqref{case1lower} and~\eqref{boundedness}, setting
\begin{align*}
    c_3=\begin{cases}
    \left(\frac{p}{2}-1\right)\min\{c_1,c_2\},&\quad\text{in Case }1,\\
    \min\{d_1,d_2\},&\quad\text{in Case }2,
  \end{cases}
\end{align*}
we get
\begin{equation*}
  pK_1+K_2\|u_n\|_{H^1}\ge c_3\|u_n\|_{H^1}^2\quad\text{for any }n\in\mathbb{N},
\end{equation*}
which implies that $(u_n)_n$ is bounded in $H^1_r(\R^3)$.

% Assume now, by contradiction, that $(u_n)_n$ in unbounded in $H^1(\R^3)$, and denote
% \[
% v_n:=\frac{u_n}{\|u_n\|_{H^1}}\quad\text{for any }n\in\mathbb{N}.
% \]
% Then, the sequence $(v_n)_n$ is bounded in $H^1(\R^3)$.
% % and, by the reflexivity of the space, there exists $v\in H^1(\R^3)$ such that (up to subsequences) $(v_n)_n$ converges to $v$ weakly in $H^1(\R^3)$ and strongly in $L^p(\R^3)$ for any .
% \noindent Moreover, $(J(u_n))_n$ is bounded in $\R$ and $(J'(u_n))_n$ is bounded in $H^{-1}(\R^3)$, being $(u_n)_n$ a $(PS)$ sequence. Then, there exist $M,N$ positive constants such that
% \begin{equation}\label{PSseq}
%J(u_n)\le\ M\quad\text{and}\quadJ'(u_n)[\psi]\le N
% \end{equation}
% for any $\psi\in H^1(\R^3)$ and for any $n\in\mathbb{N}$.

% \noindent Therefore, if we choose $\psi=\frac{u_n}{\|u_n\|_{H^1}}$, we get by~\eqref{boundedness} and~\eqref{PSseq} that
% \begin{equation}
%   \frac{1}{\min\{d_1,d_2\}}\left(\frac{M}{\|u_n\|_{H^1}}+N\right)\ge \|u_n\|_{H^1}\quad\text{for any }n\in\mathbb{N},
% \end{equation}
% which yields a contradiction.

%Then $(u_n)_n$ in bounded in particular, in the subspace $H_r^1(\R^3)$.
% \[
% H_r^1(\R^3):=\{u\in H^1(\R^3)\,:\,u(|x|)=u(x)\text{ for any }x\in\R^3\}.
% \]
% By Lemma~\ref{Lemmaradial} any critical point $u\in H_r^1(\R^3)$ of $J|_{H_r^1(\R^3)}$ is also a critical point of $J$.
Therefore, there exist a subsequence, not relabeled, and $u\in H_r^1(\R^3)$ such that $(u_n)_n$ converges to $u$ weakly in $H_r^1(\R^3)$, strongly in $L^p(\R^3)$ for any $p\in(2,6)$, and a.e. in $\R^3$. To conclude we show that the convergence in $H_r^1(\R^3)$ turns out to be strong.

By \eqref{eq:idvarphiu}, we deduce that the sequence $(\varphi_{u_n})_n\subset\mathcal D^{1,2}(\R^3)$ satisfies for all $n\in\N$
\begin{equation*}
\|\nabla \varphi_{u_n}\|_2^2\le \omega\int_{\R^3}\varphi_{u_n}u_n^2\,dx\le \omega\|\varphi_{u_n}\|_6\|u_n\|_\frac{12}{5}^2\le c_5\|\nabla\varphi_{u_n}\|_2\|u_n\|_{H^1}^2,
\end{equation*}
which implies that $(\varphi_{u_n})_n$ is bounded in $\mathcal D^{1,2}(\R^3)$. Moreover, by~\eqref{eq:below} we have
\begin{align*}
\min\{c_1,c_2\}\|u_n-u\|_{H^1}^2&\le\mathcal B_{\alpha}(u_n-u,u_n-u)+(m^2-\omega^2)\|u_n-u\|_2^2\\
&=J'(u_n)[u_n-u]-J'(u)[u_n-u]\\
&\quad-2\omega\int_{\R^3}(\varphi_{u_n}u_n-\varphi_u u)(u_n-u)\,dx\\
&\quad +\int_{\R^3}(\varphi_{u_n}^2u_n-\varphi_u^2 u)(u_n-u)\,dx\\
&\quad+\int_{\R^3}(|u_n|^{p-2}u_n-|u|^{p-2}u)(u_n-u)\,dx,
\end{align*}
being $c_1>0$ and $c_2>0$ the two constants defined in~\eqref{eq:c1c2}.
Since $J'(u_n)\to 0$ in $(H^1_r(\R^3))'$ and $u_n\rightharpoonup u$ in $H^1_r(\R^3)$ as $n\to\infty$, it follows that the first two terms converge to 0 as $n\to\infty$. Moreover, as $n\to \infty$
\begin{align*}
&\left|\int_{\R^3}(\varphi_{u_n}u_n-\varphi_u u)(u_n-u)\,dx\right|\le
\left(\|\varphi_{u_n}\|_6\|u_n\|_\frac{12}{5}+\|\varphi_u\|_6\|u\|_\frac{12}{5}\right)\|u_n-u\|_\frac{12}{5}\to 0,\\
&\left|\int_{\R^3}(\varphi_{u_n}^2u_n-\varphi_u^2 u)(u_n-u)\,dx\right|\le
\left(\|\varphi_{u_n}\|_6^2\|u_n\|_3+\|\varphi_u\|_6^2\|u\|_3\right)\|u_n-u\|_3\to 0,\\
&\left|\int_{\R^3}(|u_n|^{p-2}u_n-|u|^{p-2}u)(u_n-u)\,dx\right|\le(\|u_n\|_p^{p-1}+\|u_n\|_p^{p-1})\|u_n-u\|_p\to 0.
\end{align*}
Hence the thesis follows.
\end{proof}
We can now conclude this section by collecting all the results given in the previous lines in the proof of Theorem~\ref{mainthm1}.
\begin{proof}[Proof of Theorem~\ref{mainthm1}]
    The proof follows by Lemmas~\ref{Lemmaradial}--\ref{PS1} and Theorem~\ref{Rabinowitz86}.
\end{proof}

%----------------------------------
% KGM equation with potential
%----------------------------------

\section{Case (\text{II}): the KGM equation with external potential}
\label{Sect4}

In this section we consider problem~\eqref{eq:problem} in the case $(\text{II})$, that is when the potential $V\in C(\R^3)$, with $V_0:=\inf_{x\in\R^3}V(x)>-\infty$, satisfies~\eqref{V}.
Under these assumptions, by the same  arguments used in~\cite{BW} (see also~\cite{PXZ}), we can prove the following compactness result for the space $W$.

\begin{lemma}\label{lem:com}
Assume that $V\in C(\R^3)$, with $V_0:=\inf_{x\in\R^3}V(x)>-\infty$, satisfies~\eqref{V}. Then for all $p\in[2,6)$ the embedding $W\subset L^p(\R^3)$ is compact.
\end{lemma}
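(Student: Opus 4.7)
The plan is to adapt the classical Bartsch–Wang compactness argument (see~\cite{BW}) to the present variational setting. Let $(u_n)_n \subset W$ be a bounded sequence; by reflexivity of $W$ one extracts a subsequence (not relabeled) with $u_n \rightharpoonup u$ in $W$, hence also weakly in $H^1(\R^3)$ and in $L^6(\R^3)$, strongly in $L^q_{\mathrm{loc}}(\R^3)$ for every $q \in [2,6)$ by Rellich–Kondrachov, and pointwise a.e.\ in $\R^3$. Setting $v_n := u_n - u$, both $\|v_n\|_{H^1}$ and $\int_{\R^3}(V-V_0) v_n^2 \, dx$ remain uniformly bounded by a constant $K>0$, and $v_n \to 0$ a.e.\ and in $L^q_{\mathrm{loc}}(\R^3)$. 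Since $\|v_n\|_p \le \|v_n\|_2^{\theta}\,\|v_n\|_6^{1-\theta}$ for an appropriate $\theta = \theta(p) \in (0,1]$ and every $p \in [2,6)$, and since $(\|v_n\|_6)_n$ is bounded, the whole claim reduces to proving that $v_n \to 0$ in $L^2(\R^3)$.

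Fixing $\varepsilon>0$, I decompose
\[
\int_{\R^3} v_n^2 \, dx = \int_{B_R(0)} v_n^2 \, dx + \int_{(\R^3 \setminus B_R(0)) \cap \{V>M\}} v_n^2 \, dx + \int_{(\R^3 \setminus B_R(0)) \cap \{V \le M\}} v_n^2 \, dx
\]
for suitable $R,M>0$ to be chosen. The first integral vanishes as $n\to\infty$ for any fixed $R$ by Rellich–Kondrachov. On $\{V>M\}$ the elementary estimate $v_n^2 \le (V-V_0)v_n^2/(M-V_0)$, together with the uniform bound $\int(V-V_0)v_n^2\,dx \le K^2$, forces the second integral to be smaller than $\varepsilon/3$ once $M$ is chosen large enough.

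The main obstacle is the third term, since the set $E_{R,M} := (\R^3 \setminus B_R(0)) \cap \{V \le M\}$ may have infinite Lebesgue measure, which prevents a direct use of Hölder's inequality combined with the $L^6$ bound. The key idea is to exploit the local decay of the sublevel sets of $V$ supplied by~\eqref{V}: with $h>0$ as in~\eqref{V}, I partition $\R^3$ into disjoint cubes $(Q_k)_{k \in \mathbb{Z}^3}$ of side $h$ centered at $y_k := hk$, so that $Q_k \subset B_h(y_k)$. On each cube, Hölder's inequality and the translation-invariant Sobolev embedding $H^1(Q_k) \hookrightarrow L^6(Q_k)$ give
\[
\int_{Q_k \cap \{V \le M\}} v_n^2 \, dx \le |B_h(y_k) \cap \{V \le M\}|^{2/3} \, C_S \, \|v_n\|_{H^1(Q_k)}^2.
\]
Summing over those cubes meeting $E_{R,M}$, their disjointness yields $\sum_k \|v_n\|_{H^1(Q_k)}^2 \le \|v_n\|_{H^1(\R^3)}^2 \le K^2$, while~\eqref{V} makes $\sup_{|y_k|>R-h} |B_h(y_k) \cap \{V \le M\}|$ arbitrarily small as $R\to\infty$. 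Hence the third piece too becomes smaller than $\varepsilon/3$ once $R$ is large. Letting $n\to\infty$ and combining the three estimates yields $v_n \to 0$ in $L^2(\R^3)$, and the interpolation described above concludes the proof.
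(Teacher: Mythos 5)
Your proposal is correct and follows essentially the same route as the paper's proof: reduction to the $L^2$ case by interpolation with the bounded $L^6$ norm, splitting the exterior of a large ball according to the sublevel sets $\{V\le M\}$ and $\{V>M\}$, the weighted estimate $v_n^2\le (V-V_0)v_n^2/(M-V_0)$ on the latter, and H\"older plus a translation-invariant local Sobolev embedding combined with hypothesis~\eqref{V} on the former. The only cosmetic differences are that you tile $\R^3$ by disjoint cubes of side $h$ instead of the paper's cover by balls $B_h(y_i)$ of multiplicity at most $8$, and you use the endpoint exponent $q=6$ in the local H\"older step where the paper takes a generic $q\in(2,6)$; both variants are equally valid.
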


\begin{proof}
We first consider the case $p=2$. Let $(u_k)_k\subset W$ be a bounded sequence in $W$. Then there exists a subsequence $(u_{k_j})_j$ and a function $u\in W$ such that $u_j:=u_{k_j}\rightharpoonup u$ weakly in $W$ as $j\to\infty$. Moreover, there exists a positive constant $C$ such that
\begin{equation*}
\|u_j\|_W+\|u\|_W\le C\quad\text{for all $j\in\N$}.
\end{equation*}
For all fixed $R>0$ we have that $u_j\rightarrow u$ in $L^2(B_R(0))$ as $j\to\infty$, since $W\subset H^1(\R^3)$ and the embedding $H^1(\R^3)\subset L^2(B_R(0))$ is compact.
Hence, it remains to estimate the integral
\begin{equation*}
\int_{\R^3\setminus B_R(0)}|u_j-u|^2\,dx.
\end{equation*}

For all fixed $M>V_0$, we set
\begin{equation*}
A_1(y):=\{x\in B_h(y)\,:\,V(x)\le M\},\quad A_2(y):=\{x\in B_h(y)\,:\,V(x)>M\},
\end{equation*}
where $h>0$ is the constant independent of $M$ given by~\eqref{V}. We choose a sequence of points $(y_i)_i\subset\R^3$ such that $\R^3=\cup_{i=1}^\infty B_h(y_i)$ and each $x\in\R^3$ is covered by at most $2^3=8$ of such balls. We have
\begin{align*}
\int_{\R^3\setminus B_R(0)}|u_j-u|^2\,dx&\le\sum_{|y_i|\ge R-h}\int_{B_h(y_i)}|u_j-u|^2\,dx\\
&=\sum_{|y_i|\ge R-h}\left(\int_{A_1(y_i)}|u_j-u|^2\,dx+\int_{A_2(y_i)}|u_j-u|^2\,dx\right).
\end{align*}
We separately estimate these two integrals. For the second one we have
\begin{equation*}
\int_{A_2(y_i)}|u_j-u|^2\,dx\le\frac{1}{M-V_0}\int_{B_h(y_i)}(V-V_0)|u_j-u|^2\,dx.
\end{equation*}
To estimate the first one we fix $q\in(2,6)$.
By H\"older's inequality we then get
\begin{align*}
\int_{A_1(y_i)}|u_j-u|^2\,dx&\le\|1\|_{L^\frac{q}{q-2}(A_1(y_i))}\||u_j-u|^2\|_{L^{\frac{q}{2}}(A_1(y_i))}\\
&\le |A_1(y_i)|^{\frac{q-2}{q}}\|u_j-u\|^2_{L^q(B_h(y_i))}.
\end{align*}
Therefore
\begin{align*}
&\int_{\R^3\setminus B_R(0)}|u_j-u|^2\,dx\\
&\le\!\!\sum_{|y_i|\ge R-h}\!\!\left(\frac{1}{M-V_0}\int_{B_h(y_i)}(V-V_0)|u_j-u|^2\,dx+|A_1(y_i)|^{\frac{q-2}{2}}\|u_j-u\|^2_{L^q(B_h(y_i))}\right)\\
&\le\frac{8}{M-V_0}\int_{\R^3} (V-V_0)|u_j-u|^2\,dx+\!\!\!\!\sup_{|y_i|\ge R-h}\!\!\!\!|A_1(y_i))|^{\frac{q-2}{q}}\!\!\!\!\sum_{|y_i|\ge R-h}\!\!\!\!\|u_j-u\|^2_{L^q(B_h(y_i))}.
\end{align*}
Since $W\subset H^1(B_h(y))$ and the embedding $H^1(B_h(y))\subset L^q(B_h(y))$ is continuous, for all $h>0$ and $y\in\R^3$ we can find a constant $C_h=C_h(q)>0$ such that
\begin{equation*}
\|u\|_{L^q(B_h(y))}\le C_h\|u\|_{H^1(B_h(y))}\quad\text{for all $u\in W$}.
\end{equation*}
Notice that $C_h$ is independent of $y\in\R^3$. Hence, we can estimate
\begin{align*}
\sum_{|y_i|\ge R-h}\|u_j-u\|^2_{L^q(B_h(y_i))}&\le C_h^2\sum_{|y_i|\ge R-h}\|u_j-u\|_{H^1(B_h(y_i))}^2\\
&\le 8C_h^2\|u_j-u\|_{H^1(\R^3)}^2\le 8C_h^2\|u_j-u\|^2_W.
\end{align*}
By combining the above inequalities we get
\begin{align*}
&\int_{\R^3\setminus B_R(0)}|u_j-u|^2\,dx\\
&\le\frac{8}{M-V_0}\int_{\R^3}(V-V_0)|u_j-u|^2\,dx+8C_h^2\sup_{|y_i|\ge R-h}|A_1(y_i)|^{\frac{q-p}{q}}\|u_j-u\|^2_W\\
&\le\frac{8}{M-V_0}\left(\|u_j\|_W+\|u\|_W\right)^2+8C_h^2\sup_{|y|\ge R-h}|A_1(y)|^{\frac{q-2}{q}}\left(\|u_j\|_W+\|u\|_W\right)^2\\
&\le\frac{8C^2}{M-V_0}+8C_h^2C^2\sup_{|y|\ge R-h}|A_1(y)|^{\frac{q-2}{q}}.
\end{align*}
Let us fix $\varepsilon>0$ and choose $M$ so large that
$$\frac{8C^2}{M-V_0}<\frac{\varepsilon}{3}.$$
For such a fixed $M$, by $\eqref{V}$ there exists $R_M>0$ such that
\begin{equation*}
8C_h^2 C^2\sup_{|y|\ge R_M-h}|A_1(y)|^{\frac{q-2}{q}}<\frac{\varepsilon}{3}.
\end{equation*}
Furthermore, since $u_j\rightarrow u$ strongly in $L^2(B_{R_M}(0))$, there exists $j_0\in\N$ such that
\begin{equation*}
\int_{B_{R_M}(0)}|u_j-u|^2\,dx<\frac{\varepsilon}{3}, \quad\text{for all $j\ge j_0$}.
\end{equation*}
Thus,
\begin{equation*}
\lim_{j\rightarrow\infty}\int_{\R^3}|u_j-u|^2\,dx=0,
\end{equation*}
and so $u_j\rightarrow u$ strongly in $L^2(\R^3)$.

For all $p\in(2,6)$ we take $\theta\in (0,1)$ such that
\begin{equation*}
\frac{1}{p}=\frac{\theta}{2}+\frac{1-\theta}{6}.
\end{equation*}
Then, by using the H\"older's inequality we have
\begin{equation*}
\|u_j-u\|_{L^p(\R^3)}\le\|u_j-u\|^\theta_{L^2(\R^3)}\|u_j-u\|^{1-\theta}_{L^6(\R^3)}\rightarrow 0
\end{equation*}
as $j\rightarrow\infty$, since $u_j\rightarrow u$ in $L^2(\R^3)$ and $(u_j)_j$ is bounded in $W\subset L^6(\R^3)$.
\end{proof}

In order to prove that the functional $J$ satisfies the geometric assumptions $(i)$ and $(ii)$ of the Theorem~\ref{Rabinowitz86}, we introduce a new operator $\mathcal L_{\alpha,V}\colon W\to W'$, defined as
$$\mathcal L_{\alpha,V}u=\mathcal L_\alpha u+Vu\quad\text{for $u\in W$}.$$
As done in Definition~\ref{def:Balpha}, we can naturally associate to $\mathcal L_{\alpha,V}$ a bilinear form $\mathcal B_{\alpha,V}\colon W\times W\to \R$ as
\begin{align*}
\mathcal B_{\alpha,V}(u,v):=\mathcal B_\alpha(u,v)+\int_{\R^3}Vuv\,dx\quad\text{for all $u,v\in W$}.
\end{align*}
Notice that $\mathcal B_{\alpha,V}$ is continuous on $W\times W$ and by~\eqref{young} for all $\alpha\in\R$ there exists a constant $\gamma =\gamma (s,\alpha,V_0)\ge 0$ such that
\begin{equation}\label{eq:below2}
\mathcal B_{\alpha,V}(u,u)+\gamma \|u\|_2^2\ge \frac{1}{2}\|u\|_W^2\quad\text{for all $u\in W$}.
\end{equation}
Since the embedding $W\subset L^2(\R^3)$ is continuous, dense, and compact, thanks to Lemma~\ref{lem:den} and Lemma~\ref{lem:com}, we can apply the spectral decomposition result given in Proposition~\ref{thm:spc}.  Hence, there exists an increasing sequence $(\lambda_k)_k$ of eigenvalues of $\mathcal L_{\alpha,V}$ satisfying
\begin{equation}
-\gamma < \lambda_1\le \lambda_2\le\cdots\le \lambda_k\to \infty\quad\text{as $k\to\infty$}.
\end{equation}
Moreover, for all $k\in \mathbb N$ the eigenvalue $\lambda_k$ has finite multiplicity and there exists a sequence of eigenvectors $(e_k)_k\subset W$ corresponding to $(\lambda_k)_k$, which is an orthonormal basis of $L^2(\R^3)$.
In particular, as show in Remark~\ref{rem:dec}, if we define
\begin{equation}
H_1:=\{0\}, \quad\mathbb P_1:=W,
\end{equation}
and for all $k\ge 2$
\begin{align}
H_k:=&\textrm{span}\{e_1,\dots,e_{k-1}\}\subset W,\\
\mathbb P_k:=&\left\{u\in W\,:\, \int_{\R^3}ue_j=0\text{ for all $j=1,\dots,k-1$}\right\},
\end{align}
then $W$ is decomposable as direct sum of these two closed subspace $W=H_k\oplus \mathbb P_k$ for all $k\in\N$, with $\dim H_k=k-1<\infty$.

Let $k_0\in\N$ be such that
\begin{equation}\label{k0}
\lambda_{k_0}>\omega^2.
\end{equation}
Then, there exists a constant $c_0=c_0(s,\alpha,\omega,V_0)>0$ satifying
\begin{align}\label{eq:ck0}
\mathcal B_{\alpha,V}(u,u)-\omega^2\|u\|_2^2\ge c_0\|u\|_W^2\quad\text{for all $u\in \mathbb P_{k_0}$}.
\end{align}
Indeed, in view of~\eqref{eq:below2} and \eqref{eq:lambdak}, for all $u\in\mathbb P_{k_0}$ we have
\begin{align*}
\mathcal B_{\alpha,V}(u,u)-\omega^2\|u\|_2^2&=\mathcal B_{\alpha,V}(u,u)+\gamma \|u\|_2^2-(\omega^2+\gamma)\|u\|_2^2\\
&=\left(1-\frac{\omega^2+\gamma }{\lambda_{k_0}+\gamma }\right)\left(\mathcal B_{\alpha,V}(u,u)+\gamma \|u\|_2^2\right)\\
&\quad+\left(\frac{\omega^2+\gamma}{\lambda_{k_0}+\gamma }\right)\left(\mathcal B_{\alpha,V}(u,u)+\gamma \|u\|_2^2\right)-(\omega^2+\gamma)\|u\|_2^2\\
&\ge\left(1-\frac{\omega^2+\gamma }{\lambda_{k_0}+\gamma }\right)\|u\|_W^2=:c_0\|u\|_W^2.
\end{align*}
\begin{lemma}\label{condgeome2}
Under the assumptions of Theorem~\ref{mainthm2} the functional $J$ satisfies $(i)$ and $(ii)$ of Theorem~\ref{Rabinowitz86} in $X=W$, with $X_1=H_{k_0}$ and $X_2=\mathbb P_{k_0}$, where $\lambda_{k_0}$ is given by \eqref{k0}.
\end{lemma}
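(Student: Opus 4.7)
The plan is to verify the two geometric assumptions of Theorem~\ref{Rabinowitz86} by exploiting the coercivity estimate~\eqref{eq:ck0} available on $\mathbb P_{k_0}$, together with the pointwise bounds $0\le \varphi_u\le \omega$ on $\{u\neq 0\}$ from Lemma~\ref{lem:varphi}, which fully control the unsigned term $\tfrac{\omega}{2}\int_{\R^3}\varphi_u u^2\,dx$ appearing in $J$.

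For condition $(i)$, I would work on $\mathbb P_{k_0}$ and rewrite
\begin{align*}
J(u)=\frac{1}{2}\bigl(\mathcal B_{\alpha,V}(u,u)-\omega^2\|u\|_2^2\bigr)+\frac{\omega}{2}\int_{\R^3}\varphi_u u^2\,dx-\frac{1}{p}\|u\|_p^p.
\end{align*}
By~\eqref{eq:ck0} the first bracket is bounded below by $c_0\|u\|_W^2$, by Lemma~\ref{lem:varphi} we have $\varphi_u\ge 0$ so the second summand is nonnegative (here $\omega>0$), and by~\eqref{eq:Sobolev} the last term is at most $\tfrac{C_p^p}{p}\|u\|_W^p$. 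Hence for $u\in\mathbb P_{k_0}$
\begin{align*}
J(u)\ge \frac{c_0}{2}\|u\|_W^2-\frac{C_p^p}{p}\|u\|_W^p,
\end{align*}
and choosing $\varrho>0$ sufficiently small and $\delta:=\tfrac{c_0}{4}\varrho^2$ gives $\inf J(S_\varrho\cap\mathbb P_{k_0})\ge\delta$, since $p>2$.

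For condition $(ii)$, I would derive a quadratic upper bound on $J$ on all of $W$ and then exploit the equivalence of norms on a fixed finite dimensional subspace $Y\subset W$. Using~\eqref{young} with $\varepsilon=1$ one obtains $\mathcal B_\alpha(u,u)\le (1+|\alpha|)\|u\|_W^2$, while by the definition of the $W$-norm $\int_{\R^3} V u^2\,dx\le \|u\|_W^2+|V_0|\|u\|_2^2\le (1+|V_0|)\|u\|_W^2$. Finally, the bound $\varphi_u\le \omega$ on $\{u\neq 0\}$ gives $\omega\int_{\R^3}\varphi_u u^2\,dx\le \omega^2\|u\|_2^2\le \omega^2\|u\|_W^2$. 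Combining these yields a constant $K>0$ with
\begin{align*}
J(u)\le K\|u\|_W^2-\frac{1}{p}\|u\|_p^p\quad\text{for every $u\in W$}.
\end{align*}
On the finite dimensional space $Y$, the norms $\|\cdot\|_W$ and $\|\cdot\|_p$ are equivalent, so $\|u\|_p\ge c_Y\|u\|_W$ for some $c_Y>0$, giving $J(u)\le K\|u\|_W^2-\tfrac{c_Y^p}{p}\|u\|_W^p\to-\infty$ as $\|u\|_W\to\infty$, and the existence of $R=R(Y)>0$ with $J(u)\le 0$ for $\|u\|_W\ge R$ follows.

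The only nontrivial point, compared with the analysis in Section~\ref{Sect3}, is that the quadratic part of $J$ is no longer coercive on the full space $W$ (since $\mathcal L_{\alpha,V}$ may possess negative eigenvalues $\lambda_1,\dots,\lambda_{k_0-1}\le\omega^2$), which is precisely why one must restrict to the spectral complement $\mathbb P_{k_0}$ for assumption $(i)$; the control of the nonlocal term through the bilinear form $\mathcal B_{\alpha,V}$ and the spectral decomposition of Appendix~\ref{SectA} are exactly what make this replacement effective, while the sign conditions on $\varphi_u$ absorb all the coupling terms without further hypotheses on $V$.
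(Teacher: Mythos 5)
Your proof is correct and follows essentially the same route as the paper: for $(i)$ you drop the nonnegative term $\tfrac{\omega}{2}\int_{\R^3}\varphi_u u^2\,dx$ and combine~\eqref{eq:ck0} with~\eqref{eq:Sobolev} on $\mathbb P_{k_0}$, and for $(ii)$ you bound $J$ from above by $K\|u\|_W^2-\tfrac{1}{p}\|u\|_p^p$ via~\eqref{young} and use equivalence of norms on the finite-dimensional subspace, exactly as in the paper's argument. Your version merely makes explicit the constants and the role of $0\le\varphi_u\le\omega$ that the paper leaves implicit.
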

\begin{proof}
Let us prove $(i)$. By~\eqref{eq:Sobolev} and~\eqref{eq:ck0}, for all $u\in X_2=\mathbb P_{k_0}$ we have
\begin{equation}
J(u)\ge \frac{1}{2}\mathcal B_{\alpha,V}(u,u)-\frac{\omega^2}{2}\|u\|_2^2-\frac{1}{p}\|u\|_p^p\ge\left(\frac{c_0}{2}-\frac{C_p^p}{p}\|u\|_W^{p-2}\right)\|u\|_W^2.
\end{equation}
Hence, arguing as in Lemma~\ref{condgeome1}, there exist $\delta,\varrho>0$ such that
\begin{equation}
\inf J(S_\varrho\cap X_2)\ge\delta,
\end{equation}
where $S_\varrho:=\{u\in W\,:\, \|u\|_W=\varrho\}$.

Let us prove $(ii)$. By~\eqref{young} there exists a constant $K>0$ such that for all finite dimensional space $Y\subset W$ and $u\in Y$ we have
\begin{equation}
J(u)\le K\|u\|_W^2-\frac{1}{p}\|u\|_p^p\to-\infty
\end{equation}
as $\|u\|_W\to \infty$, since all the norms are equivalent.
\end{proof}

\begin{lemma}\label{PS2}
Under the assumptions of Theorem~\ref{mainthm2} the functional $J$ satisfies $(iii)$ of Theorem~\ref{Rabinowitz86}.
\end{lemma}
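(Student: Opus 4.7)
The plan is to verify the $(PS)$ condition in two standard steps: first, to show that any Palais--Smale sequence $(u_n)_n\subset W$ is bounded in $W$; second, to extract a subsequence that converges strongly. Both steps will rely crucially on the compact embedding $W\hookrightarrow L^q(\R^3)$ for $q\in[2,6)$ granted by Lemma~\ref{lem:com}, and on the global coercivity~\eqref{eq:below2} of $\mathcal B_{\alpha,V}$ modulo a multiple of the $L^2$-norm. Observe that the constant $\gamma$ in~\eqref{eq:below2} is finite for every $\alpha\in\R$: this is precisely what allows us to drop any sign or size restriction on $\alpha$ in Theorem~\ref{mainthm2}.

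For the boundedness step I would compute, as in Lemma~\ref{PS1}, the combination $pJ(u_n)-J'(u_n)[u_n]$, discarding the nonnegative term $\int\varphi_{u_n}^2u_n^2$ and treating $(p/2-2)\omega\int\varphi_{u_n}u_n^2$ in the two cases $p\ge 4$ (nonnegative by Lemma~\ref{lem:varphi}) and $p\in(2,4)$ (bounded from below by $(p/2-2)\omega^2\|u_n\|_2^2$ using $\varphi_{u_n}\le\omega$ on $\{u_n\ne 0\}$). Together with~\eqref{eq:below2}, this yields an inequality of the form
\[
\tfrac{p-2}{4}\|u_n\|_W^2\le D\|u_n\|_2^2+K_2\|u_n\|_W+pK_1
\]
for suitable constants $D,K_1,K_2$. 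Arguing by contradiction, if $\|u_n\|_W\to\infty$ then this inequality forces $\|u_n\|_2\to\infty$ proportionally, so $v_n:=u_n/\|u_n\|_2$ is bounded in $W$; Lemma~\ref{lem:com} then gives $v_n\to v$ strongly in every $L^q(\R^3)$, $q\in[2,6)$, with $\|v\|_2=1$ and therefore $\|v\|_p>0$. Dividing the relation $J'(u_n)[u_n]=o(\|u_n\|_W)$ by $\|u_n\|_2^2$ keeps all terms bounded except $\|u_n\|_2^{p-2}\|v_n\|_p^p$, which diverges because $p>2$: contradiction.

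Once $(u_n)_n$ is bounded, up to subsequence $u_n\rightharpoonup u$ in $W$ and $u_n\to u$ strongly in $L^q(\R^3)$ for every $q\in[2,6)$. The associated sequence $(\varphi_{u_n})_n$ is bounded in $\mathcal D^{1,2}(\R^3)$ via~\eqref{eq:idvarphiu} and Sobolev embedding, as at the end of Lemma~\ref{PS1}. Evaluating $(J'(u_n)-J'(u))[u_n-u]$, I expect the nonlinear contributions (involving $\varphi_{u_n}u_n$, $\varphi_{u_n}^2u_n$ and $|u_n|^{p-2}u_n$) to vanish by H\"older's inequality at the exponents $12/5$, $3$ and $p$, all lying in $[2,6)$; the remaining linear part then reduces to $\mathcal B_{\alpha,V}(u_n-u,u_n-u)-\omega^2\|u_n-u\|_2^2\to 0$, which combined with~\eqref{eq:below2} and $u_n\to u$ in $L^2(\R^3)$ forces $\|u_n-u\|_W\to 0$. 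The main obstacle is the boundedness step: in contrast with case~(I), no geometric restriction among $\alpha,m,\omega,p$ is available to absorb the leftover $L^2$-remainder, and the contradiction argument must transfer it onto the superlinear ratio $\|u_n\|_2^{p-2}\|v_n\|_p^p$, whose blow-up requires precisely the nonvanishing $\|v\|_p>0$ guaranteed by the compactness in Lemma~\ref{lem:com}, i.e.\ by assumption~\eqref{V}.
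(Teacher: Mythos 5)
Your proposal is correct and follows essentially the same strategy as the paper: the combination $pJ(u_n)-J'(u_n)[u_n]$ together with the coercivity~\eqref{eq:below2} gives $c_1\|u_n\|_W^2-c_2\|u_n\|_2^2\le pK_1+K_2\|u_n\|_W$, a normalized subsequence converges strongly in $L^q(\R^3)$ by Lemma~\ref{lem:com} to a limit that is nonzero in $L^2$ yet must vanish because the superlinear term $\|\cdot\|_p^p$ would otherwise dominate, and the strong convergence step is identical. The only cosmetic difference is that you normalize by $\|u_n\|_2$ and read the contradiction off $J'(u_n)[u_n]/\|u_n\|_2^2$, whereas the paper normalizes by $\|u_n\|_W$ and compares $\|w\|_2\ge c_1/c_2>0$ with $\|w_n\|_p\to 0$ obtained from $\tfrac1p\|u_n\|_p^p\le\tfrac12\mathcal B_{\alpha,V}(u_n,u_n)-J(u_n)$.
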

\begin{proof}
Let $(u_n)_n\subset W$ be a $(PS)$ sequence. By using Lemma~\ref{lem:varphi}, the lower bound~\eqref{eq:below2},  and that $p\in(2,6)$, for all $n\in\mathbb N$ we have
\begin{align*}
&pJ(u_n)-J'(u_n)[u_n]\\
&=\left(\frac{p}{2}-1\right) \mathcal B_{\alpha,V}(u_n,u_n)-\omega^2\left(\frac{p}{2}-1\right)\int_{\R^3}u_n^2\,dx\\
&\quad+\omega\left(\frac{p}{2}-2\right)\int_{\R^3}\varphi_{u_n}u_n^2\,dx+\int_{\R^3}\varphi_{u_n}^2 u_n^2\,dx\\
&\ge\frac{1}{2}\left(\frac{p}{2}-1\right)\|u_n\|_W^2-(\gamma+\omega^2)\left(\frac{p}{2}-1\right)\|u_n\|_2^2-\omega\int_{\R^3}\varphi_{u_n}u_n^2\,dx\\
&\ge c_1\|u_n\|_W^2-c_2\|u_n\|_2^2,
\end{align*}
for two constants $c_1,c_2>0$.

Assume by contradiction that $\|u_n\|_W\to \infty$ and define $w_n:=\frac{u_n}{\|u_n\|_W}$ for all $n\in\N$. Since $\|w_n\|_W=1$ for all $n\in\N$, by Lemma~\ref{lem:com} there exist a subsequence, not relabeled, and a function $w\in W$ such that as $n\to\infty$
\begin{equation}
w_n\rightharpoonup w\text{ in $W$},\quad w_n\to w\text{ in $L^p(\R^3)$ for all $p\in [2,6)$}.
\end{equation}
In particular, since
\begin{equation}
c_1-c_2\|w_n\|_2^2\le \frac{pJ(u_n)}{\|u_n\|_W^2}-\frac{J'(u_n)[u_n]}{\|u_n\|_W^2}\to 0\quad\text{as $n\to\infty$}
\end{equation}
we have
\begin{equation}
\|w\|_2^2\ge \frac{c_1}{c_2}>0.
\end{equation}
On the other hand, by Lemma~\ref{lem:varphi}, for all $n\in\N$ we have
\begin{equation}
\frac{1}{p}\|u_n\|_p^p\le \frac{1}{2}\mathcal B_{\alpha,V}(u_n,u_n)-J(u_n).
\end{equation}
Therefore, since $(u_n)_n$ is a $(PS)$ sequence, we deduce
\begin{equation}
0<\frac{1}{p}\|w_n\|_p^p\le \frac{1}{2}\frac{\mathcal B_{\alpha,V}(u_n,u_n)}{\|u_n\|_W^p}+\frac{|J(u_n)|}{\|u_n\|_W^p}\le \frac{c_3}{\|u_n\|_W^{p-2}}+\frac{c_4}{\|u_n\|_W^p}\to 0
\end{equation}
as $n\to\infty$, with $c_3,c_4>0$. Hence $w=0$, which leads to a contradiction.

Since the sequence $(u_n)_n\subset W$ is bounded, there exist a subsequence, not relabeled, and $u\in W$ such that as $n\to\infty$
\begin{equation}
u_n\rightharpoonup u\text{ in $W$},\quad u_n\to u\text{ in $L^p(\R^3)$ for all $p\in [2,6)$}.
\end{equation}
By~\eqref{eq:idvarphiu} also the sequence $(\varphi_{u_n})_n\subset\mathcal D^{1,2}(\R^3)$ is bounded, since for all $n\in\mathbb N$
\begin{equation}
\|\nabla \varphi_{u_n}\|_2^2\le \omega\int_{\R^3}\varphi_{u_n}u_n^2\,dx\le \omega\|\varphi_{u_n}\|_6\|u_n\|_\frac{12}{5}^2\le c_5\|\nabla\varphi_{u_n}\|_2\|u_n\|_W^2.
\end{equation}
We claim that $u_n\to u$ in $W$ as $n\to\infty$. Indeed, by~\eqref{eq:below2} for all $n\in\mathbb N$ we have
\begin{align*}
\frac{1}{2}\|u_n-u\|_W^2&\le\mathcal B_{\alpha,V}(u_n-u,u_n-u)+\gamma\|u_n-u\|_2^2\\
&=J'(u_n)[u_n-u]-J'(u)[u_n-u]+(\gamma+\omega^2) \|u_n-u\|_2^2\\
&\quad -2\omega\int_{\R^3}(\varphi_{u_n}u_n-\varphi_u u)(u_n-u)\,dx\\
&\quad +\int_{\R^3}(\varphi_{u_n}^2u_n-\varphi_u^2 u)(u_n-u)\,dx\\
&\quad+\int_{\R^3}(|u_n|^{p-2}u_n-|u|^{p-2}u)(u_n-u)\,dx.
\end{align*}
Since $J'(u_n)\to 0$ in $W'$, $u_n\rightharpoonup u$ in $W$, and $u_n\to u$ in $L^2(\R^3)$ as $n\to\infty$, it follows that the first three terms converge to 0 as $n\to\infty$. Moreover, as $n\to \infty$
\begin{align*}
&\left|\int_{\R^3}(\varphi_{u_n}u_n-\varphi_u u)(u_n-u)\,dx\right|\le
\left(\|\varphi_{u_n}\|_6\|u_n\|_\frac{12}{5}+\|\varphi_u\|_6\|u\|_\frac{12}{5}\right)\|u_n-u\|_\frac{12}{5}\to 0,\\
&\left|\int_{\R^3}(\varphi_{u_n}^2u_n-\varphi_u^2 u)(u_n-u)\,dx\right|\le
\left(\|\varphi_{u_n}\|_6^2\|u_n\|_3+\|\varphi_u\|_6^2\|u\|_3\right)\|u_n-u\|_3\to 0,\\
&\left|\int_{\R^3}(|u_n|^{p-2}u_n-|u|^{p-2}u)(u_n-u)\,dx\right|\le(\|u_n\|_p^{p-1}+\|u_n\|_p^{p-1})\|u_n-u\|_p\to 0.
\end{align*}
Hence $J$ satisfies $(PS)$.
\end{proof}

Similarly to what done in Section~\ref{Sect3}, we can now gather all the previous results to prove Theorem~\ref{mainthm2}.
\begin{proof}[Proof of Theorem~\ref{mainthm2}]
    The proof follows by Lemmas~\ref{condgeome2}--\ref{PS2} and Theorem~\ref{Rabinowitz86}.
\end{proof}

%---------------------------------
% Appendix
%---------------------------------

\appendix

\section{Spectral Theory for mixed local-nonlocal operators}\label{SectA}
In this appendix we present a proof for the spectral decomposition result used in Section~\ref{Sect4}, which is Proposition~\ref{thm:spc}. This proof of this result is quite standard in the literature, we refer for example to~\cite[Theorem 1]{Evans} and~\cite[Theorem 9.1]{Brezis}. For the sake of completeness, we present it here in a more general abstract setting.

Let $H$ be a separable (real) Hilbert space with scalar product $(\,\cdot,\,\cdot\,)_H$ and corresponding norm $\|\,\cdot\,\|_H$, and let $V\subset H$ be a Hilbert subspace with scalar product $(\,\cdot,\,\cdot\,)_V$ and corresponding norm $\|\,\cdot\,\|_V$. Assume that the embedding $V\subset H$ is dense, continuous, and compact. We identify $H$ with its dual $H'$, so the embedding $V\subset H$ induces the embedding $H\subset V'$, defined as
\begin{equation}
\langle h,v\rangle_{V'\times V}:=(h,v)_H\quad\text{for all $h\in H$ and $v\in V$}.
\end{equation}
Notice that the embedding $H\subset V'$ is dense and continuous by our assumption on $V$ and $H$.

Let $\mathcal B\colon V\times V\to \R$ be a symmetric bilinear form on $V$ satisfying:
\begin{itemize}
\item $\mathcal B$ is continuous, i.e., there exists a constant $K>0$ such that
\begin{equation}
|\mathcal B(v,w)|\le K\|v\|_V\|w\|_V\quad\text{for all $v,w\in V$};
\end{equation}
\item there exists $\gamma \ge 0$ and $\beta>0$ such that
\begin{equation}
\mathcal B(v,v)+\gamma \|v\|_H^2\ge \beta\|v\|_V^2, \quad\text{for all $u\in V$}.
\end{equation}
\end{itemize}

\begin{definition}
We say that two vectors $v,w\in V$ are $\mathcal B$-orthogonal if $\mathcal B(v,w)=0$.
\end{definition}

We associate to the bilinear form $\mathcal B$ a linear and continuous map $\mathcal L\colon V\to V'$ as
\begin{equation}
\langle\mathcal Lv,w\rangle_{V'\times V}=\mathcal B(v,w)\quad\text{for all $v,w\in V$}.
\end{equation}

\begin{definition}
We say that $\lambda\in\R$ is an eigenvalue of $\mathcal L$ in $V$ if there exists a vector $v\in V\setminus\{0\}$ such that
\begin{equation}
\mathcal Lv=\lambda v\quad\text{in $V'$},
\end{equation}
or equivalently
\begin{equation}
\mathcal B(v,w)=\lambda(v,w)_H\quad\text{for all $v,w\in V$}.
\end{equation}
The vector $v\in V\setminus\{0\}$ is called eigenvector corresponding to the eigenvalue $\lambda$.
\end{definition}

\begin{definition}
Let $\lambda\in\R$ be an eigenvalue of $\mathcal L$. We say that $\lambda$ has finite multiplicity if
\begin{equation}
\{v\in V\,:\,\mathcal Lv=\lambda v\}
\end{equation}
is a finite dimensional linear subspace of $V$.

\end{definition}

\begin{proposition}\label{thm:spc}
Under the previous assumptions, there exists an increasing sequence $(\lambda_k)_k$ of eigenvalues of $\mathcal L$ satisfying
\begin{equation}
-\gamma < \lambda_1\le \lambda_2\le\cdots\le \lambda_k\to \infty\quad\text{as $k\to\infty$}.
\end{equation}
Moreover, for all $k\in \mathbb N$ the eigenvalue $\lambda_k$ has finite multiplicity and there exists a sequence of eigenvectors $(e_k)_k\subset V$ corresponding to $(\lambda_k)_k$ satisfying
\begin{itemize}
\item[(a)] $(e_k)_k$ is an orthonormal basis of $H$;
\item[(b)] $e_k$ and $e_j$ are $\mathcal B$-orthogonal for all $k,j\in\mathbb N$ with $k\neq j$.
\end{itemize}
Finally, if we define $\mathbb P_1:=V$ and
\begin{equation}
\mathbb P_k:=\{v\in V\,:\,(v,e_j)_H=0\text{ for all $j=1,\dots,k-1$}\}\quad\text{for all $k\ge 2$},
\end{equation}
then for all $k\in\mathbb N$ we can characterize the eigenvalue $\lambda_k$ as
\begin{equation}\label{eq:lambdak}
\lambda_k:=\min_{u\in \mathbb P_k\setminus\{0\}}\frac{\mathcal B(u,u)}{\|u\|_H^2},
\end{equation}
and the eigenvector $e_k$ corresponding to the eigenvalue $\lambda_k$ realizes the minimum.
\end{proposition}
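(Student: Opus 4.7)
The plan is to reduce the problem to the spectral theorem for compact self-adjoint operators on a Hilbert space, via the standard trick of inverting a shifted coercive bilinear form.

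First, I would set $\widetilde{\mathcal B}(v,w) := \mathcal B(v,w) + \gamma(v,w)_H$ for $v,w\in V$. By the coercivity assumption, $\widetilde{\mathcal B}(v,v) \ge \beta\|v\|_V^2$, and together with symmetry and continuity of $\mathcal B$, $\widetilde{\mathcal B}$ is an equivalent scalar product on $V$. For every $f\in H$, the identification $H\subset V'$ gives $f\in V'$, and Lax--Milgram (equivalently, Riesz applied in the $\widetilde{\mathcal B}$ geometry) produces a unique $u\in V$ with $\widetilde{\mathcal B}(u,v) = (f,v)_H$ for all $v\in V$. Define $T\colon H\to V$, $f\mapsto u$. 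It is linear and bounded as a map into $V$, because $\beta\|u\|_V^2\le \widetilde{\mathcal B}(u,u) = (f,u)_H \le C\|f\|_H\|u\|_V$.

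Next, composing with the compact embedding $V\hookrightarrow H$, the induced operator $T\colon H\to H$ is compact. It is self-adjoint: for $f,g\in H$ with $u=Tf$ and $v=Tg$,
\begin{equation}
(Tf,g)_H = (u,g)_H = \widetilde{\mathcal B}(v,u) = \widetilde{\mathcal B}(u,v) = (f,v)_H = (f,Tg)_H,
\end{equation}
using symmetry of $\widetilde{\mathcal B}$. It is also positive: $(Tf,f)_H = \widetilde{\mathcal B}(Tf,Tf) \ge \beta\|Tf\|_V^2 \ge 0$, with strict positivity unless $Tf = 0$, which (by density of $V$ in $H$ and the defining equation) forces $f=0$. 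Therefore the spectral theorem for compact self-adjoint operators yields an orthonormal basis $(e_k)_k$ of $H$ together with eigenvalues $\mu_k>0$, $\mu_k\searrow 0$, such that $Te_k=\mu_k e_k$. Each $\mu_k$-eigenspace is finite-dimensional since $T$ is compact.

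I would then translate back: $Te_k=\mu_k e_k$ means $\widetilde{\mathcal B}(e_k,v) = (e_k/\mu_k,v)_H$ for all $v\in V$, equivalently $\mathcal B(e_k,v)=(1/\mu_k-\gamma)(e_k,v)_H$. Setting $\lambda_k := 1/\mu_k - \gamma$, one obtains eigenvalues of $\mathcal L$ with $\lambda_k\to\infty$ and $\lambda_k> -\gamma$. Moreover $e_k\in V$ because $e_k = \mu_k^{-1}Te_k\in \mathrm{range}(T)\subset V$. The $\mathcal B$-orthogonality of distinct $e_k,e_j$ follows immediately from $\mathcal B(e_k,e_j)=\lambda_k(e_k,e_j)_H = 0$.

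The last step is the variational characterization. The first eigenvalue is standard: $\lambda_1=\min_{v\in V\setminus\{0\}}\mathcal B(v,v)/\|v\|_H^2$ because $T$ attains its largest eigenvalue $\mu_1$ through the Rayleigh quotient $\max_{f\neq 0}(Tf,f)_H/\|f\|_H^2$, which translates into the minimum of $\widetilde{\mathcal B}(v,v)/\|v\|_H^2$ on $V\setminus\{0\}$. For $k\ge 2$, the constraint space $\mathbb P_k$ is exactly the $H$-orthogonal complement in $V$ of $\mathrm{span}\{e_1,\dots,e_{k-1}\}$, which is $T$-invariant; restricting $T$ to the closure of $\mathbb P_k$ in $H$ and applying the same Rayleigh argument yields the claimed formula, with $e_k\in\mathbb P_k$ achieving the minimum. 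The main technical point to be careful about is verifying that the minimum in $\mathbb P_k$ is attained in $V$ (not just in the $H$-closure) and that the resulting minimizer is indeed the $k$-th eigenvector of $\mathcal L$; this follows by unwinding the correspondence $\mu_k \leftrightarrow \lambda_k$ once more and invoking the finite multiplicity to order the eigenvectors consistently.
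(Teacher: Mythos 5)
Your proposal is correct and follows essentially the same route as the paper: shift $\mathcal B$ by $\gamma(\cdot,\cdot)_H$ to get a coercive form, invert via Lax--Milgram to obtain a compact, self-adjoint, positive, injective operator on $H$, apply the spectral theorem, and translate the eigenvalues back through $\lambda_k=1/\mu_k-\gamma$. The only cosmetic difference is in the variational characterization \eqref{eq:lambdak}, where the paper expands $\|v\|_H^2$ and $\mathcal B^\gamma(v,v)$ in the $\mathcal B^\gamma$-orthonormal basis $(\sqrt{\mu_k}\,e_k)_k$ of $V$ (which directly settles the attainment issue you flag), whereas you invoke a Rayleigh-quotient argument on the $T$-invariant subspaces; both are standard and equivalent.
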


\begin{proof}
We define the bilinear form $\mathcal B^\gamma \colon V\times V\to \R$ as
\begin{equation}
\mathcal B^\gamma (v,w):=\mathcal B(v,w)+\gamma (v,w)_H\quad\text{for all $v,w\in V$},
\end{equation}
and we consider the associated linear and continuous map $\mathcal L^\gamma\colon V\to V'$ (notice that $\mathcal L^\gamma$ and $\mathcal L$ are related by the relation $\mathcal L^\gamma v=\mathcal Lv+\gamma v$ for all $v\in V$). The bilinear form $\mathcal B^\gamma $ is symmetric, continuous, and it satisfies
\begin{equation}\label{eq:scal}
\mathcal B^\gamma (v,v)\ge \beta\|v\|_V^2\quad\text{for all $v\in V$}.
\end{equation}
Therefore, by Lax-Milgram theorem $\mathcal L^\gamma \colon V\to V'$ is invertible and we can consider the inverse operator $(\mathcal L^\gamma)^{-1}\colon V'\to V$ which is still linear and continuous.

Since $\mathcal L^\gamma$ is invertible and the embedding $V\subset H$ is dense, we derive that $\lambda\in\R$ is an eigenvalue of $\mathcal L$ in $V$ with eigenvector $v\in V\setminus\{0\}$ if and only $\frac{1}{\lambda+\gamma}$ is an eigenvalue of $R^\gamma:=(\mathcal L^\gamma)^{-1}$ in $H$ with eigenvector $v\in H\setminus\{0\}$. The operator $R^\gamma\colon H\to H$ is linear, continuous, and compact, since the embedding $V\subset H$ is compact. Moreover $R^\gamma$ is injective and self-adjoint, being $\mathcal B^\gamma $ symmetric, and
\begin{equation}
(R^\gamma h,h)_H=\mathcal B^\gamma (R^\gamma h,R^\gamma h)\ge 0\quad\text{for all $h\in H$}.
\end{equation}

Therefore, by~\cite[Theorem 6.9 and Theorem 6.11]{Brezis} there exists a decreasing sequence $(\mu_k)_k$ of eigenvalues of $R^\gamma$ with $\mu_k>0$ and $\mu_k\to 0$ as $k\to\infty$. Moreover, every $\mu_k$ has finite multiplicity and there exists a orthonormal basis $(e_k)_k$ of $H$ given by eigenvectors of $R^\gamma$ associated to $\mu_k$. Hence, if we consider
\begin{equation}
\lambda_k:=\frac{1}{\mu_k}-\gamma\quad\text{for all $k\in\mathbb N$},
\end{equation}
we get that $(\lambda_k)_k$ is an increasing sequence of eigenvalues of $\mathcal L$ in $V$, with $\lambda_k\to\infty$ as $k\to\infty$ and such that every $\lambda_k$ has finite multiplicity. Moreover, for all $k\in\mathbb N$ the vector $e_k\in V\setminus\{0\}$ is an eigenvector for $\lambda_k$ and for all $k,j\in\mathbb N$ with $k\neq j$
\begin{equation}
\mathcal B(e_k,e_j)=\lambda_k(e_k,e_j)_H=0,
\end{equation}
i.e., $e_k$ and $e_j$ are $\mathcal B$-orthogonal.

Let us prove~\eqref{eq:lambdak}. By~\eqref{eq:scal} the bilinear form $\mathcal B^\gamma $ is a scalar product on $V$ equivalent to the standard one. Moreover, for all $k,j\in\mathbb N$ with $k\neq j$ we have
\begin{equation}\label{eq:ort}
\mathcal B^\gamma (e_k,e_k)=\frac{1}{\mu_k}\|e_k\|_H^2=\frac{1}{\mu_k},\quad\mathcal B^\gamma (e_k,e_j)=\frac{1}{\mu_k}(e_k,e_j)_H=0.
\end{equation}
This implies that $(\sqrt{\mu_k}{e_k})_k$ is an orthonormal basis in $V$ with respect to the scalar product $\mathcal B^\gamma $. In particular, we obtain
\begin{align*}
&\|v\|_H^2=\sum_{k=1}^\infty| (v,e_k)_H|^2,\quad\mathcal B^\gamma (v,v)=\sum_{k=1}^\infty |\mathcal B^\gamma (v,\sqrt{\mu_k}e_k)|^2=\sum_{k=1}^\infty\frac{1}{\mu_k}|(v,e_k)_H|^2.
\end{align*}
Hence, we get~\eqref{eq:lambdak} for all $k\in\N$ by exploiting the definition of $\mathbb P_k$.
\end{proof}

\begin{remark}\label{rem:dec}
Let us define
\begin{equation}
H_1:=\{0\},\quad H_k:=\textrm{span}\{e_1,\dots,e_{k-1}\}\subset V\text{ for all $k\ge 2$}.
\end{equation}
By~\eqref{eq:ort} the closed linear subspaces $H_k$ and $\mathbb P_k$ are $\mathcal B^\gamma $-orthogonal. In particular, $V$ is decomposable as direct sum $V = H_k \oplus \mathbb P_k$ for all $k\in\N$.
\end{remark}

\section*{Statements and Declarations}
\noindent\textbf{Conflict of interest}  The authors declare that they have  no conflict of interest.

\bigskip

\noindent\textbf{Acknowledgements.} The authors are members and acknowledge the support of {\it Gruppo Nazionale per l’Analisi Matematica, la Probabilità e le loro Applicazioni} (GNAMPA) of {\it Istituto Nazionale di Alta Matematica} (INdAM).

N.~Cangiotti acknowledges the support of the MIUR - PRIN 2017 project ``From Models to Decisions" (Prot. N. 201743F9YE).

M.~Caponi and A.~Maione acknowledge the support of the DFG SPP 2256 project ``Variational Methods for Predicting Complex Phenomena in Engineering Structures and Materials''.

M.~Caponi, A.~Maione and E.~Vitillaro are also supported by the INdAM - GNAMPA Project ``Equazioni differenziali alle derivate parziali di tipo misto o dipendenti da campi di vettori'' (Project number CUP\_E53C22001930001).

M.~Caponi acknowledges also the support of the project STAR PLUS 2020 - Linea 1 (21-UNINA-EPIG-172) ``New perspectives in the Variational modeling of Continuum Mechanics'', and of the MIUR - PRIN 2017 project ``Variational Methods for Stationary and Evolution Problems with Singularities and Interfaces'' (Prot. N. 2017BTM7SN).

E.~Vitillaro is supported by  ``Progetti Equazioni delle onde con condizioni iperboliche ed acustiche al bordo,  finanziati  con  i Fondi  Ricerca  di Base 2017--2022, della Universit\`a degli Studi di Perugia''.

The authors wish to thank Dimitri Mugnai for many useful discussions.

% \noindent\textbf{Conflict of interest} The authors declare that they have no conflict of interest.

%---------------------------------
% Bibliography
%---------------------------------

\end{document}